\newtheorem{theorem}{Theorem}[section]
\newtheorem{lemma}[theorem]{Lemma}
\newtheorem{proposition}[theorem]{Proposition}
\theoremstyle{definition}
\newtheorem{definition}[theorem]{Definition}
\newtheorem{example}[theorem]{Example}
\theoremstyle{remark}
\newtheorem{remark}[theorem]{Remark}
\numberwithin{equation}{section}
\begin{document}

\title[Boundedness of composition operators on Morrey spaces]
{Boundedness of composition operators on Morrey spaces and weak Morrey spaces}

\author{Naoya Hatano, Masahiro Ikeda, Isao Ishikawa, and Yoshihiro Sawano}
\address[Naoya Hatano]{Department of Mathematics, Faculty of Science and Technology, Keio University, 3-14-1 Hiyoshi, Kohoku-ku, Yokohama 223-8522, Japan/Center for Advanced Intelligence Project, RIKEN, Japan, and Department of Mathematics, Chuo University, 1-13-27, Kasuga, Bunkyo-ku, Tokyo 112-8551, Japan,}
\address[Masahiro Ikeda]{Center for Advanced Intelligence Project, RIKEN, Japan/Department of Mathematics, Faculty of Science and Technology, Keio University, 3-14-1 Hiyoshi, Kohoku-ku, Yokohama 223-8522, Japan
}
\address[Isao Ishikawa]{Center for Advanced Intelligence Project, RIKEN, Japan, and Department of Engineering for Production and Environment, Graduate School of Science and Engineering, Ehime University, 3 Bunkyo-cho, Matsuyama, Ehime 790-8577, Japan,}
\address[Yoshihiro Sawano]{Department of Mathematics, Faculty of Science and Technology, Keio University, 3-14-1 Hiyoshi, Kohoku-ku, Yokohama 223-8522, Japan/Center for Advanced Intelligence Project, RIKEN, Japan, and Department of Mathematics, Chuo University, 1-13-27, Kasuga, Bunkyo-ku, Tokyo 112-8551, Japan}

\subjclass[2020]{Primary  42B35; Secondary 47B33}



\keywords{Composition operators, Boundedness, Morrey spaces,
weak Morrey spaces}

\maketitle

\begin{abstract}
In this study, we investigate the boundedness of composition operators 
acting on Morrey spaces and weak Morrey spaces.  
The primary aim of this study is to investigate
a necessary and sufficient condition on the boundedness 
of the composition operator induced by a diffeomorphism on Morrey spaces.
In particular, detailed information is derived from the boundedness, 
i.e.,
 the bi-Lipschitz continuity of the mapping that induces the composition operator
follows from the continuity of the composition mapping. 
The idea of the proof is to determine the Morrey norm of the characteristic functions, 
and employ a specific function composed of a characteristic function. 
As the specific function belongs to Morrey spaces 
but not to Lebesgue spaces, 
 the result reveals a new phenomenon
 not observed in Lebesgue spaces.
Subsequently, we prove the boundedness of the composition operator 
induced by a mapping that satisfies a suitable volume estimate 
on general weak-type spaces generated by normed spaces. 
As a corollary, a necessary and sufficient condition 
for the boundedness of the composition operator 
on weak Morrey spaces is provided.

\end{abstract}

\section{Introduction}

In this study, 
we investigate the boundedness of composition operators 
on Morrey spaces and weak Morrey spaces. 
The composition operator $C_{\varphi}$ induced by a mapping $\varphi$ is a linear operator defined by $C_{\varphi}f\equiv f\circ\varphi$, where $f\circ\varphi$ 
represents the function composition. 
The composition operator is also called 
the Koopman operator in the fields of dynamical systems, physics, 
and engineering \cite{Koopman31}. Recently, it has attracted attention in various scientific fields \cite{Kawahara, IFIHK}.

Let $(X,\mu)$ be a $\sigma$-finite measure space, and $L^0(X,\mu)$ be the set of all $\mu$-measurable functions on $X$.  We provide a precise definition of the composition operators induced by a measurable map $\varphi:X\rightarrow X$.
\begin{definition}[Composition operator]
 Let $\varphi:X\rightarrow X$ be a measurable map, and assume that $\varphi$ is nonsingular, namely, $\mu(\varphi^{-1}(E))=0$ for each $\mu$-measurable null set $E$. Let $V$ and $W$ be function spaces contained in $L^0(X,\mu)$. The {\em composition operator} $C_{\varphi}$ is the linear operator from $W$ to $V$ such that its domain is $\mathcal{D}(C_{\varphi})\equiv \{h \in W : h\circ \varphi\in V\}$, and $C_{\varphi}f\equiv f\circ \varphi$ for $f\in \mathcal{D}(C_{\varphi})$.
\end{definition}

Subsequently, we employ the result obtained by Singh \cite{Singh76}
for the boundedness of the composition operator 
on the Lebesgue space $L^p(X,\mu)$. 
Henceforth, 
 we denote by $L^0(X,\mu)$ the space of all $\mu$-measurable functions.

Singh \cite{Singh76} provided the following necessary and sufficient condition 
for the map $\varphi$ to generate a bounded mapping
acting on Lebesgue spaces:

\begin{theorem}[\cite{Singh76}]\label{thm:200727-1}
\label{thm singh}
Let $0<p<\infty$. Then, the composition operator $C_\varphi$ induced by $\varphi:X\rightarrow X$ 
is
bounded on the Lebesgue space $L^p(X,\mu)$ if and only 
if there exists a constant $K=K(\varphi)$ such that for all $\mu$-measurable sets $E$ in ${\mathbb R}^n$,
\begin{equation*}
\mu(\varphi^{-1}(E))\le K\mu(E).
\end{equation*}
In this case, the operator norm is given by
\begin{equation}\label{eq:200216-2.2}
\|C_\varphi\|_{L^p\to L^p}
=
\sup_{0<\mu(E)<\infty}\left(\frac{\mu(\varphi^{-1}(E))}{\mu(E)}\right)^{\frac1p}.
\end{equation}
\end{theorem}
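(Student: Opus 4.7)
The plan is to prove the two directions of the equivalence separately and simultaneously track the sharp constant so as to obtain the operator-norm formula \eqref{eq:200216-2.2}. The natural test functions are the characteristic functions $\chi_E$ of measurable sets with finite measure, and the technical engine is the push-forward measure $\mu\circ\varphi^{-1}$.

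For the necessity of the measure inequality, I would fix a measurable set $E$ with $0<\mu(E)<\infty$ and note that $\chi_E\circ\varphi=\chi_{\varphi^{-1}(E)}$, so that
\begin{equation*}
\mu(\varphi^{-1}(E))^{1/p}=\|C_\varphi\chi_E\|_{L^p}\le\|C_\varphi\|_{L^p\to L^p}\,\|\chi_E\|_{L^p}=\|C_\varphi\|_{L^p\to L^p}\,\mu(E)^{1/p}.
\end{equation*}
Raising to the $p$-th power yields $\mu(\varphi^{-1}(E))\le\|C_\varphi\|_{L^p\to L^p}^{p}\mu(E)$, so one may take $K=\|C_\varphi\|_{L^p\to L^p}^{p}$. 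Taking the supremum over all such $E$ gives the inequality $\sup_{0<\mu(E)<\infty}(\mu(\varphi^{-1}(E))/\mu(E))^{1/p}\le\|C_\varphi\|_{L^p\to L^p}$ that is one half of \eqref{eq:200216-2.2}.

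For sufficiency, I would introduce the push-forward $\nu(E)\equiv\mu(\varphi^{-1}(E))$, which is a $\sigma$-finite measure because $\varphi$ is nonsingular; the hypothesis $\nu(E)\le K\mu(E)$ says exactly that $\nu\ll\mu$ with Radon--Nikodym derivative bounded by $K$ almost everywhere. The standard transfer identity for the push-forward, verified first on simple functions and extended by monotone convergence to all nonnegative measurable $f$, gives
\begin{equation*}
\int_X |f\circ\varphi|^{p}\,d\mu=\int_X |f|^{p}\,d\nu=\int_X |f|^{p}\,\frac{d\nu}{d\mu}\,d\mu.
\end{equation*}
Since $d\nu/d\mu\le K$, this is dominated by $K\int_X|f|^{p}\,d\mu$, proving that $C_\varphi$ is bounded with $\|C_\varphi\|_{L^p\to L^p}^{p}\le K$. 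Choosing $K$ to be the essential supremum of $d\nu/d\mu$, which coincides with $\sup_{0<\mu(E)<\infty}\nu(E)/\mu(E)$ by a standard approximation argument using a $\sigma$-finite exhaustion, produces the reverse inequality in \eqref{eq:200216-2.2}.

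The only step that requires care is the transfer identity together with the identification of the sharp $K$ with the supremum of ratios $\mu(\varphi^{-1}(E))/\mu(E)$; this is where $\sigma$-finiteness is used, both to invoke Radon--Nikodym and to ensure that the essential supremum of $d\nu/d\mu$ is realized by a sequence of test sets. Once that identification is in place, combining the two inequalities forces equality and yields the norm formula \eqref{eq:200216-2.2}.
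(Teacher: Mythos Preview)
The paper does not contain its own proof of this statement: Theorem~\ref{thm:200727-1} is quoted as a known result from \cite{Singh76} and is used as background input (e.g., inside the proof of Theorem~\ref{thm:200319-1.1}), so there is no in-paper argument to compare against.

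That said, your proof is essentially the standard one and is correct in outline. One small wording issue: you write that the push-forward $\nu=\mu\circ\varphi^{-1}$ is $\sigma$-finite ``because $\varphi$ is nonsingular.'' Nonsingularity by itself only gives $\nu\ll\mu$; the $\sigma$-finiteness of $\nu$ is a consequence of the hypothesis $\nu(E)\le K\mu(E)$ together with the $\sigma$-finiteness of $\mu$ (pick an exhaustion $X=\bigcup_n X_n$ with $\mu(X_n)<\infty$ and note $\nu(X_n)\le K\mu(X_n)<\infty$). With that correction the Radon--Nikodym step and the identification $\|d\nu/d\mu\|_{L^\infty}=\sup_{0<\mu(E)<\infty}\nu(E)/\mu(E)$ go through, and both halves of \eqref{eq:200216-2.2} follow as you describe. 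Also note that sets with $\mu(E)=0$ or $\mu(E)=\infty$ are handled trivially (the former by nonsingularity, the latter because the inequality is vacuous), so restricting the test in the necessity step to $0<\mu(E)<\infty$ is harmless.
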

The
boundedness of the composition operator
on $L^{\infty}(X,\mu)$ easily follows from the definition. 
Theorem \ref{thm:200727-1} was extended to several important function spaces, such as Lorentz spaces \cite{ADV07, Evseev17}, Orlicz spaces \cite{CHRL04, EvMe19-1,Kumar97}, 
mixed Lebesgue spaces \cite{EvMe19-2}, 
Musielak-Orlicz spaces \cite{RaSh12} 
and reproducing kernel Hilbert spaces \cite{IISpre}. However, there are no previous results 
on the boundedness of composition operators 
acting on Morrey spaces and weak Morrey spaces. 

The first aim of this strudy is to investigate a necessary and sufficient condition on the boundedness of the composition operator $C_\varphi$
on Morrey spaces.
Subsequently, we discuss
 the boundedness of the operator on weak Morrey spaces.

Hereafter, we consider the Euclidean space $\mathbb{R}^n$;
$\mu$ is the Lebesgue measure ${\rm dx}$. 
The set of all measurable functions is denoted
by $L^0({\mathbb R}^n)$.
We denote by $|E|$ the volume of a measurable set $E \subset \mathbb{R}^n$. 
Let $\chi_A:\mathbb{R}^n\rightarrow\mathbb{R}_{\ge 0}$ be an indicator function for a subset $A\subset \mathbb{R}^n$, 
which is defined as $\chi_A(x)=1$ 
if $x\in A$ and $\chi_A(x)=0$,
otherwise.

Now, we recall the definition of Morrey spaces on $\mathbb{R}^n$.
\begin{definition}[Morrey space]
Let $0<q\le p<\infty$. The {\it Morrey space} ${\mathcal M}^p_q({\mathbb R}^n)$ is a quasi-Banach space defined by
\begin{equation*}
{\mathcal M}^p_q({\mathbb R}^n)
\equiv
\{
f\in L^0({\mathbb R}^n)
:
\|f\|_{{\mathcal M}^p_q}
<\infty
\},
\end{equation*}
endowed with the quasi-norm
\[
   \|f\|_{{\mathcal M}^p_q}\equiv
\sup_{Q\in{\mathcal Q}}|Q|^{\frac1p-\frac1q}\|f\chi_Q\|_{L^q},
\]
where ${\mathcal Q}$ denotes the family of all cubes parallel to the coordinate axis in ${\mathbb R}^n$.
\end{definition}

From the H\"older inequality, 
we observe that the Lebesgue space
$L^p({\mathbb R}^n)$ is embedded into the Morrey space
${\mathcal M}^p_q({\mathbb R}^n)$,
 where $0< q \le p<\infty$.

\begin{remark}\label{rem:190809-1}
Let $0<q\le p<\infty$.
Then, we have
\begin{equation*}
L^p({\mathbb R}^n)
=
{\mathcal M}^p_p({\mathbb R}^n)
\subset
{\mathcal M}^p_q({\mathbb R}^n).
\end{equation*}
Moreover, $L^p({\mathbb R}^n)$ is not dense in ${\mathcal M}^p_q({\mathbb R}^n)$ \cite{Sawano17}.
\end{remark}

We now state the main results of the present paper.
The following theorem provides a sufficient condition 
on the boundedness of the composition operator 
$C_{\varphi}$ on the Morrey space $\mathcal{M}^p_q(\mathbb{R}^n)$.

\begin{theorem}\label{thm:200319-1.1}
Let $0<q\le p<\infty$.
Then, the composition operator $C_{\varphi}$ 
induced by $\varphi:{\mathbb R}^n\to{\mathbb R}^n$ is bounded 
on the Morrey space ${\mathcal M}^p_q({\mathbb R}^n)$, 
 if $\varphi$ is a Lipschitz map that satisfies the volume estimate
\begin{equation}\label{eq:200531-1}
|\varphi^{-1}(E)|\le K|E|,
\end{equation}
for all measurable sets $E$ in ${\mathbb R}^n$ and some constant $k$ independent of $E$.
In particular, we obtain
\begin{equation}\label{eq:200701-1}
\|C_\varphi\|_{{\mathcal M}^p_q\to{\mathcal M}^p_q}
\le
(\max(1,\sqrt{n}L))^{-\frac np+\frac nq}
\sup_{E:0<|E|<\infty}\left(\frac{|\varphi^{-1}(E)|}{|E|}\right)^{\frac1q},
\end{equation}
where $L>0$ is a Lipschitz constant of $\varphi$,
 and $E$ in the supremum moves over all the
Lebesgue measurable sets $E$
satisfying $0<|E|<\infty$.
\end{theorem}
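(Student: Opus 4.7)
My plan is to reduce the Morrey-norm estimate on $f\circ\varphi$---a supremum over cubes $Q\in\mathcal{Q}$---to the $L^q$ bound provided by Theorem \ref{thm:200727-1}. The key geometric ingredient is that a Lipschitz map sends a cube of side $\ell(Q)$ to a set of diameter at most $\sqrt{n}L\,\ell(Q)$, which therefore fits inside some axis-parallel cube $R$ of side $M\,\ell(Q)$, where $M:=\max(1,\sqrt{n}L)$. Taking the max with $1$ (rather than simply $\sqrt{n}L$) is what produces the stated constant in the regime $\sqrt{n}L<1$; I explain below why this is optimal.

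Fixing such $Q$ and $R$, I will use the pointwise domination $|(f\circ\varphi)(x)\chi_Q(x)| \le |(f\chi_R)(\varphi(x))|$, which holds because $x\in Q$ forces $\varphi(x)\in R$. Taking $L^q$-norms and invoking Theorem \ref{thm:200727-1} applied to $f\chi_R\in L^q(\mathbb{R}^n)$ gives
\begin{equation*}
   \|(f\circ\varphi)\chi_Q\|_{L^q} \le \|(f\chi_R)\circ\varphi\|_{L^q} \le C_0\,\|f\chi_R\|_{L^q},
\end{equation*}
where $C_0 := \sup_{0<|E|<\infty}(|\varphi^{-1}(E)|/|E|)^{1/q}$ is the operator norm of $C_\varphi$ on $L^q(\mathbb{R}^n)$. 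Multiplying by $|Q|^{1/p-1/q}$, using $|Q|=M^{-n}|R|$ so that $|Q|^{1/p-1/q}=M^{-n/p+n/q}\,|R|^{1/p-1/q}$, and then dominating $|R|^{1/p-1/q}\|f\chi_R\|_{L^q}$ by $\|f\|_{\mathcal{M}^p_q}$, I recover
\begin{equation*}
   |Q|^{1/p-1/q}\|(f\circ\varphi)\chi_Q\|_{L^q} \le M^{-n/p+n/q}\,C_0\,\|f\|_{\mathcal{M}^p_q},
\end{equation*}
and taking the supremum over $Q\in\mathcal{Q}$ yields \eqref{eq:200701-1}.

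There is no deep obstacle here; the only subtlety is the sign of $1/p-1/q$, which is nonpositive. This means that choosing $R$ too small relative to $Q$ would \emph{worsen} the ratio $(|Q|/|R|)^{1/p-1/q}$, so one cannot blindly take $R$ of side $\sqrt{n}L\,\ell(Q)$ when $L$ is small. The optimal choice is exactly $M=\max(1,\sqrt{n}L)$, reflecting the two lower bounds on $M$: namely $M\ge \sqrt{n}L$ (so $R\supset\varphi(Q)$) and $M\ge 1$ (so that the ratio $M^{n/q-n/p}\ge 1$ is minimized). Beyond this bookkeeping, the argument is essentially a one-line transfer from Theorem \ref{thm:200727-1}.
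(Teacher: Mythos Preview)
Your proof is correct and essentially identical to the paper's: fix a cube $Q$, enclose $\varphi(Q)$ in a cube $R$ of side $\max(1,\sqrt{n}L)\,\ell(Q)$, apply Singh's $L^q$ bound (Theorem~\ref{thm:200727-1}) to $f\chi_R$, and convert the ratio $|Q|/|R|$ into the constant $(\max(1,\sqrt{n}L))^{-n/p+n/q}$. One minor quibble: your final paragraph's claim that taking $R$ smaller would \emph{worsen} the bound is backwards---since $1/p-1/q\le 0$, a smaller $|R|$ makes $(|Q|/|R|)^{1/p-1/q}$ smaller, not larger---but this side remark plays no role in the proof of the stated inequality.
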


Conversely, as stated in the following theorem, if $\varphi:\mathbb{R}^n\rightarrow\mathbb{R}^n$ is a diffeomorphism, then 
the ${\mathcal M}^p_q({\mathbb R}^n)$-boundedness 
of the composition operators $C_\varphi$ and $C_{\varphi^{-1}}$ indicates that $\varphi$ is bi-Lipschitz.
Note that any bi-Lipschitz mapping satisfies the assumption of Theorem \ref{thm:200319-1.1}. 

\begin{theorem}\label{thm:200319-1.3}
Let $n\in \mathbb{N}$, and $\varphi:\mathbb{R}^n\rightarrow\mathbb{R}^n$ be a diffeomorphism 
in the sense that $\varphi$ and its inverse $\varphi^{-1}$ are differentiable. 
Suppose $0<q<p<\infty$, or $q=p$ and $n=1$. If the composition operators $C_{\varphi}$ and $C_{\varphi^{-1}}$ induced by maps $\varphi$ and $\varphi^{-1}$, respectively, are bounded on ${\mathcal M}^p_q({\mathbb R}^n)$, then $\varphi$ is bi-Lipschitz.
\end{theorem}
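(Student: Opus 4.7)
The plan is to derive the bi-Lipschitz property by extracting pointwise singular-value bounds on $D\varphi$ from the boundedness hypotheses. By the symmetric roles of the two hypotheses, it suffices to show that boundedness of $C_\varphi$ on $\mathcal{M}^p_q(\mathbb{R}^n)$ forces $\sigma_{\min}(D\varphi(x))$ to be bounded below by a positive constant uniformly in $x$; applying the same statement to $\varphi^{-1}$ then bounds $\sigma_{\max}(D\varphi(x))$ above, and together these derivative bounds produce the Lipschitz constants of $\varphi$ and $\varphi^{-1}$ via integration along segments.

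I would fix $x_0$, set $\epsilon := \sigma_{\min}(D\varphi(x_0))$, and diagonalise via the SVD $D\varphi(x_0) = U\Sigma V^T$. Since rotations are bi-Lipschitz and volume-preserving, Theorem~\ref{thm:200319-1.1} implies $C_R$ is bounded on $\mathcal{M}^p_q$ for every rotation $R$; setting $\tilde\varphi := U^T \varphi V$, the identity $C_{\tilde\varphi} = C_V\circ C_\varphi\circ C_{U^T}$ shows $C_{\tilde\varphi}$ is also bounded. One may therefore assume $D\tilde\varphi(\tilde x_0) = \mathrm{diag}(\sigma_1,\ldots,\sigma_n)$ with $\sigma_1 = \epsilon \le \cdots \le \sigma_n$. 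This reduction is needed because $\mathcal{M}^p_q$ is defined through axis-parallel cubes and is not rotation invariant.

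The key calculation is the Morrey norm of characteristic functions: $\|\chi_Q\|_{\mathcal{M}^p_q} = |Q|^{1/p}$ for cubes, and for an axis-parallel rectangle $E = \prod_i[-a_i,a_i]$ with $a_1 \le \cdots \le a_n$, a case analysis on cubes of side $2s$ shows the supremum defining $\|\chi_E\|_{\mathcal{M}^p_q}$ is attained at $s = a_{k^*}$ with $k^* = \lceil nq/p\rceil$ and equals (up to a universal constant) $a_{k^*}^{n/p - k^*/q}\prod_{i \le k^*} a_i^{1/q}$. For anisotropic $E$ this is strictly smaller than $|E|^{1/p}$; this gap is the ``new phenomenon'' of the abstract, coming from $L^p \subsetneq \mathcal{M}^p_q$. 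Applying boundedness to $\chi_{E_t}$ with $E_t := \tilde\varphi(\tilde x_0) + \prod_i[-2\sigma_i t, 2\sigma_i t]$ (the factor $2$ absorbs the $o(t)$ linearisation error so that $\tilde\varphi(\tilde x_0 + [-t,t]^n) \subset E_t$ for $t$ below a threshold depending on $\epsilon$), together with $Q^* := \tilde x_0 + [-t,t]^n \subset \tilde\varphi^{-1}(E_t)$, gives
\[
(2t)^{n/p} = \|\chi_{Q^*}\|_{\mathcal{M}^p_q} \le \|\chi_{\tilde\varphi^{-1}(E_t)}\|_{\mathcal{M}^p_q} \le C \|\chi_{E_t}\|_{\mathcal{M}^p_q},
\]
which after cancelling the $t^{n/p}$ factor collapses to a single algebraic inequality $\sigma_{k^*}^{n/p - k^*/q}\prod_{i \le k^*} \sigma_i^{1/q} \ge c_0/C$ on the singular values at $x_0$. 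In the sub-case $nq \le p$ one has $k^* = 1$ and this simplifies to $\sigma_1^{n/p} \ge c_0/C$, immediately giving the uniform bound $\epsilon \ge (c_0/C)^{p/n}$.

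In the sub-case $nq > p$ the same inequality couples $\sigma_1$ with $\sigma_2, \ldots, \sigma_{k^*}$ and by itself does not bound $\epsilon$ if the larger $\sigma_i$ are unbounded; this is the main technical obstacle. The resolution is to run the identical argument with $\varphi$ replaced by $\varphi^{-1}$, whose singular values at $\varphi(x_0)$ are $1/\sigma_n \le \cdots \le 1/\sigma_1$, obtaining a dual constraint that bounds a product of the largest $\sigma_i$ from above. Combining the two constraints algebraically will force each $\sigma_i(D\varphi(x))$ into an interval $[c, C']$ of positive constants independent of $x$, yielding bi-Lipschitz\-ness. Finally, in the remaining case $n = 1$, $q = p$ covered by the second alternative of the hypothesis, Theorem~\ref{thm:200727-1} converts boundedness of $C_\varphi$ on $L^p(\mathbb{R})$ into $|\varphi^{-1}(I)| \le K|I|$ for every interval $I$; monotonicity of a diffeomorphism of $\mathbb{R}$ turns $|\varphi^{-1}([a,b])| = |\varphi^{-1}(b) - \varphi^{-1}(a)| \le K|b-a|$ into the Lipschitz bound for $\varphi^{-1}$, and symmetrically for $\varphi$.
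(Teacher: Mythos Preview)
Your outline is correct and reaches the conclusion by a route that differs from the paper's, especially in the sub-case $q<p<nq$. After the common SVD reduction, the paper proves a blow-up lemma transferring the operator-norm bound from $C_\varphi$ to $C_{D\varphi(x_0)}$ for every $x_0$, then establishes a determinant bound $\prod_i\sigma_i(x_0)\sim 1$ via a cyclic-permutation trick on diagonal matrices, and finally multiplies inequalities indexed by all $(m-1)$-element subsets of $\{2,\dots,n\}$ to isolate $\sigma_1$. You instead extract a single inequality $P_A:=\sigma_{k^*}^{n/p-k^*/q}\prod_{i\le k^*}\sigma_i^{1/q}\ge c_1$ from $C_\varphi$ (via the containment $Q^*\subset\tilde\varphi^{-1}(E_t)$) and its dual $P_B\le c_2$ from $C_{\varphi^{-1}}$; these two constraints do suffice, but the step you label ``combining the two constraints algebraically'' is not immediate and deserves more than a sentence. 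One clean way to carry it out: both products have strictly positive exponents summing to $n/p$, and Abel summation gives $\log(P_B/P_A)=\sum_{j=1}^{n-1}\delta_j(\log\sigma_{j+1}-\log\sigma_j)$ with every partial-sum coefficient $\delta_j>0$ precisely when $q<p$; since $P_B/P_A\le c_2/c_1$ and each summand is nonnegative, every ratio $\sigma_{j+1}/\sigma_j$ is bounded, so all $\sigma_i$ are comparable and then $P_A\ge c_1$ forces $\sigma_1\gtrsim 1$. Your argument is more elementary in that it avoids both the determinant lemma and the subset combinatorics; the paper's blow-up lemma, on the other hand, cleanly separates linearisation from the test-function analysis and in the range $nq\le p$ permits the striking test function $\chi_{[0,1]\times\mathbb{R}^{n-1}}\in\mathcal{M}^p_q\setminus L^p$, whereas your finite-rectangle test with $k^*=1$ recovers the same bound $\sigma_1^{n/p}\gtrsim 1$. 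One minor inaccuracy: the threshold on $t$ for which $\tilde\varphi(Q^*)\subset E_t$ depends on $x_0$ through the $o(t)$ remainder in the differentiability of $\varphi$, not only on $\epsilon$; this is harmless because the $t$-dependence cancels after dividing through by $t^{n/p}$.
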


\begin{remark}
In the case of $p=q$ and $n=1$, 
Theorem \ref{thm:200319-1.3}
reduced to Theorem \ref{thm singh} 
according to \cite{Singh76}.
Unless $n=1$, condition $q<p$ is essential in the following sense. If $n\ge 2$ and $q=p$, then the same conclusion as in Theorem \ref{thm:200319-1.3} fails. We present a counterexample in Example \ref{ex:200603-1} in Section \ref{s4}. 
Noting that the Morrey space ${\mathcal M}^p_p({\mathbb R}^n)$ 
coincides with the Lebesgue space $L^p({\mathbb R}^n)$ 
(see Remark \ref{rem:190809-1}), we observe a new phenomenon from Theorem
 \ref{thm:200319-1.3}.
\end{remark}

Subsequently, we investigate the characterization 
of the boundedness of the composition operators
acting on weak Morrey spaces, 
which are defined as follows:
\begin{definition}[Weak Morrey space]
Let $0<q\le p<\infty$. The {\it weak Morrey space} ${\rm W}{\mathcal M}^p_q({\mathbb R}^n)$ is a quasi-Banach space defined by
\begin{equation*}
{\rm W}{\mathcal M}^p_q({\mathbb R}^n)
\equiv
\{
f\in L^0({\mathbb R}^n)
:
\|f\|_{{\rm W}{\mathcal M}^p_q}
<\infty
\}
\end{equation*}
endowed with the quasi-norm
\[
   \|f\|_{{\rm W}{\mathcal M}^p_q}\equiv
\sup_{\lambda>0}\lambda\|\chi_{\{x\in{\mathbb R}^n:|f(x)|>\lambda\}}\|_{{\mathcal M}^p_q}.
\]
\end{definition}

The
weak Morrey space
${\rm W}{\mathcal M}^p_q({\mathbb R}^n)$ 
has
the following basic properties:

\begin{remark}\label{rem:190809-1}
Let $0<q<p<\infty$.
Then, we have
\begin{equation*}
{\mathcal M}^p_q({\mathbb R}^n)
\subset
{\rm W}{\mathcal M}^p_q({\mathbb R}^n),
\quad
{\rm W}{\mathcal M}^p_p({\mathbb R}^n)
=
{\rm W}L^p({\mathbb R}^n),
\end{equation*}
where ${\rm W}L^p({\mathbb R}^n)$ is the weak Lebesgue space (see \cite[Chapter 1]{Grafakos14-1} for more).
\end{remark}

The following theorem provides a necessary and sufficient condition 
on the boundedness of the composition operator on weak Morrey spaces.

\begin{theorem}\label{thm:200303-1}
Let $0<q\le p<\infty$, and let $\varphi:{\mathbb R}^n\to{\mathbb R}^n$ be 
a measurable function.
Then, 
$\varphi$
generates
the composition operator $C_\varphi$ which is bounded 
on the weak Morrey space ${\rm W}{\mathcal M}^p_q({\mathbb R}^n)$ if and only if there exists a constant $K$ such that for all measurable sets $E$ in ${\mathbb R}^n$, the estimate
\begin{equation*}
\|\chi_{\varphi^{-1}(E)}\|_{{\mathcal M}^p_q}
\le K
\|\chi_E\|_{{\mathcal M}^p_q}.
\end{equation*}
holds.
In particular, we obtain
\begin{equation*}
\|C_\varphi\|_{{\rm W}{\mathcal M}^p_q\to {\rm W}{\mathcal M}^p_q}
=
\sup_E\frac{\|\chi_{\varphi^{-1}(E)}\|_{{\mathcal M}^p_q}}{\|\chi_E\|_{{\mathcal M}^p_q}},
\end{equation*}
where the supremum is taken over all the measurable sets $E$ in ${\mathbb R}^n$ with $0<\|\chi_E\|_{{\mathcal M}^p_q}<\infty$.
\end{theorem}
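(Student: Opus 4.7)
The plan rests on one elementary but crucial observation: for any measurable set $E$, the weak Morrey and Morrey quasi-norms of $\chi_E$ coincide. Since $\{x\in{\mathbb R}^n:|\chi_E(x)|>\lambda\}=E$ for $0<\lambda<1$ and is empty for $\lambda\ge 1$, the defining supremum in $\|\chi_E\|_{{\rm W}{\mathcal M}^p_q}$ collapses to $\|\chi_E\|_{{\mathcal M}^p_q}$. Combined with the pointwise identity $C_\varphi\chi_E=\chi_{\varphi^{-1}(E)}$, this is the bridge between the operator norm on the weak Morrey space and the condition stated on characteristic functions.

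For necessity, I would feed $\chi_E$ into the bounded operator. The identification above yields
\begin{equation*}
\|\chi_{\varphi^{-1}(E)}\|_{{\mathcal M}^p_q}
=\|C_\varphi\chi_E\|_{{\rm W}{\mathcal M}^p_q}
\le\|C_\varphi\|_{{\rm W}{\mathcal M}^p_q\to{\rm W}{\mathcal M}^p_q}\|\chi_E\|_{{\mathcal M}^p_q},
\end{equation*}
so the required estimate holds with $K=\|C_\varphi\|_{{\rm W}{\mathcal M}^p_q\to{\rm W}{\mathcal M}^p_q}$, and at the same time one deduces that the supremum appearing in the norm formula is dominated by $\|C_\varphi\|_{{\rm W}{\mathcal M}^p_q\to{\rm W}{\mathcal M}^p_q}$.

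For sufficiency, I would use the elementary distributional identity
\begin{equation*}
\{x\in{\mathbb R}^n:|C_\varphi f(x)|>\lambda\}
=\varphi^{-1}(\{y\in{\mathbb R}^n:|f(y)|>\lambda\}),
\end{equation*}
apply the hypothesis to the set on the right-hand side, multiply by $\lambda$, and take the supremum over $\lambda>0$. This chain delivers $\|C_\varphi f\|_{{\rm W}{\mathcal M}^p_q}\le K\|f\|_{{\rm W}{\mathcal M}^p_q}$, so that the operator norm is dominated by the supremum. Combined with the reverse inequality obtained in the necessity step, this also establishes the equality in the operator-norm formula.

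The argument is essentially frictionless once the coincidence of the two quasi-norms on characteristic functions is noted, and no genuine analytic obstacle appears; the main point is to recognize that this collapse reduces the whole statement to testing on characteristic functions. The only tacit hypothesis-checking concerns the well-definedness of $C_\varphi$ on equivalence classes, but this follows automatically from the stated condition: if $|E|=0$ then $\|\chi_E\|_{{\mathcal M}^p_q}=0$, hence $\|\chi_{\varphi^{-1}(E)}\|_{{\mathcal M}^p_q}=0$, which forces $|\varphi^{-1}(E)|=0$, so that $\varphi$ is nonsingular without any extra assumption.
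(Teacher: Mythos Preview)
Your proposal is correct and follows essentially the same route as the paper: the paper proves the more general Theorem~\ref{thm:200503-1} for an arbitrary normed space $B$ and then specializes to $B={\mathcal M}^p_q$, but the argument is identical---observe $\|\chi_E\|_{{\rm W}B}=\|\chi_E\|_B$, test on $\chi_E$ for necessity, and use the level-set identity $\{|C_\varphi f|>\lambda\}=\varphi^{-1}(\{|f|>\lambda\})$ for sufficiency. Your additional remark that nonsingularity of $\varphi$ is forced by the hypothesis is a nice touch not made explicit in the paper.
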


\begin{remark}
\begin{itemize}
\item[(1)] Theorem \ref{thm:200303-1} 
indicates that the composition operator $C_\varphi$
is bounded 
on weak Morrey spaces,
 once it is bounded
on Morrey spaces (see Section \ref{s4} for more).
\item[(2)] The conclusion of cases $q=p$ in this theorem was provided in \cite{CVFR15}.
\item[(3)] Theorem \ref{thm:200303-1} is a special case of Theorem \ref{thm:200503-1} below.
\end{itemize}
\end{remark}

In fact, we 
will establish
the boundedness of the composition operator 
in a more general framework.

\begin{definition}
Let $(B({\mathbb R}^n),\|\cdot\|_B)$ be a linear subspace of $L^0({\mathbb R}^n)$ such that $\||f|\|_B=\|f\|_B$ for all $f\in B({\mathbb R}^n)$.
The weak-type space $({\rm W}B({\mathbb R}^n),\|\cdot\|_{{\rm W}B})$ of $B$ is defined by
\begin{equation*}
{\rm W}B({\mathbb R}^n)
\equiv
\{
f\in L^0({\mathbb R}^n)
:
\|f\|_{{\rm W}B}
<\infty
\},
\end{equation*}
endowed with the quasi-norm
\[
\|f\|_{{\rm W}B}
\equiv
\sup_{\lambda>0}\lambda\|\chi_{\{x\in{\mathbb R}^n:|f(x)|>\lambda\}}\|_B.
\]
\end{definition}

Now, we can rewrite Theorem \ref{thm:200303-1} as follows:

\begin{theorem}\label{thm:200503-1}
Let $(B({\mathbb R}^n),\|\cdot\|_B)$ be a normed space.
Then, the composition induced by $\varphi$ is bounded on the weak-type space $({\rm W}B({\mathbb R}^n),\|\cdot\|_{{\rm W}B})$ if and only if there exists a constant $K$ such that for all measurable sets $E$ in ${\mathbb R}^n$, the estimate
\begin{equation}\label{eq:200503-1}
\|\chi_{\varphi^{-1}(E)}\|_B
\le K
\|\chi_E\|_B.
\end{equation}
holds.
In particular, we obtain
\begin{equation}\label{eq:200720-1}
\|C_\varphi\|_{{\rm W}B\to {\rm W}B}
=
\sup_E\frac{\|\chi_{\varphi^{-1}(E)}\|_B}{\|\chi_E\|_B},
\end{equation}
where the supremum is taken over all the measurable sets $E$ in ${\mathbb R}^n$ with $0<\|\chi_E\|_B<\infty$.
\end{theorem}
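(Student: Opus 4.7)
The plan is to reduce everything to two elementary observations, after which both directions collapse to one-line calculations. The first observation is that for every measurable $E\subset{\mathbb R}^n$ one has $\|\chi_E\|_{{\rm W}B}=\|\chi_E\|_B$: the superlevel set $\{|\chi_E|>\lambda\}$ equals $E$ for $0<\lambda<1$ and is empty otherwise, so the defining supremum reduces to $\sup_{0<\lambda<1}\lambda\|\chi_E\|_B=\|\chi_E\|_B$. The second observation is the set-theoretic identity
\[
\{x\in{\mathbb R}^n:|(C_\varphi f)(x)|>\lambda\}=\varphi^{-1}\bigl(\{y\in{\mathbb R}^n:|f(y)|>\lambda\}\bigr),
\]
valid for any measurable $f$ and any $\lambda>0$.

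For the sufficiency direction I would fix $f\in{\rm W}B$, apply \eqref{eq:200503-1} to the superlevel sets $E_\lambda\equiv\{|f|>\lambda\}$, and use the second observation to obtain
\[
\lambda\,\|\chi_{\{|C_\varphi f|>\lambda\}}\|_B=\lambda\,\|\chi_{\varphi^{-1}(E_\lambda)}\|_B\le K\lambda\,\|\chi_{E_\lambda}\|_B.
\]
Taking the supremum in $\lambda>0$ yields $\|C_\varphi f\|_{{\rm W}B}\le K\|f\|_{{\rm W}B}$, and hence $\|C_\varphi\|_{{\rm W}B\to{\rm W}B}\le\sup_E\|\chi_{\varphi^{-1}(E)}\|_B/\|\chi_E\|_B$, where the supremum ranges over sets with $0<\|\chi_E\|_B<\infty$.

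Conversely, assume $C_\varphi$ is bounded on ${\rm W}B$. For every measurable $E$ with $0<\|\chi_E\|_B<\infty$, the first observation places $\chi_E\in{\rm W}B$ and identifies its ${\rm W}B$-norm with its $B$-norm; feeding $f=\chi_E$ into the boundedness estimate and applying the first observation to both $\chi_E$ and $\chi_{\varphi^{-1}(E)}$ produces
\[
\|\chi_{\varphi^{-1}(E)}\|_B=\|C_\varphi\chi_E\|_{{\rm W}B}\le\|C_\varphi\|_{{\rm W}B\to{\rm W}B}\,\|\chi_E\|_B.
\]
This yields \eqref{eq:200503-1} with $K=\|C_\varphi\|_{{\rm W}B\to{\rm W}B}$, and the reverse inequality in \eqref{eq:200720-1}; combined with the sufficiency estimate, equality follows.

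No step appears substantive enough to constitute a genuine obstacle; the only point that deserves care is the bookkeeping around which indicator functions actually lie in ${\rm W}B$ and the tacit use of the nonsingularity built into the definition of $C_\varphi$, which guarantees that $C_\varphi\chi_E=\chi_{\varphi^{-1}(E)}$ is well defined in $L^0({\mathbb R}^n)$. It is worth noting that the hypothesis that $(B,\|\cdot\|_B)$ be a genuine norm rather than a quasi-norm plays no role in the argument above, so the conclusion extends verbatim to the quasi-normed case.
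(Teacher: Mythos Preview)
Your proposal is correct and follows essentially the same approach as the paper's own proof: both reduce to the identity $\|\chi_E\|_{{\rm W}B}=\|\chi_E\|_B$ (the paper records this as a remark preceding the proof) and the superlevel-set identity $\{|C_\varphi f|>\lambda\}=\varphi^{-1}(\{|f|>\lambda\})$, then test on indicator functions for one direction and on superlevel sets for the other. Your write-up is in fact slightly more explicit, and your closing remark that the normed-space hypothesis is never used is a valid observation not made in the paper.
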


The remainder of this paper is organized as follows:
In Section \ref{s2}, we prove Theorems \ref{thm:200319-1.1} and \ref{thm:200319-1.3}.
In Section \ref{s3}, we present some examples and counterexamples 
of the mapping that induces the composition operator to be bounded on Morrey spaces.
In Section \ref{s4}, we prove Theorem \ref{thm:200503-1}.

\section{Proof of Theorems \ref{thm:200319-1.1} and \ref{thm:200319-1.3}}\label{s2}
In this section, we prove Theorems \ref{thm:200319-1.1} and \ref{thm:200319-1.3}. 
The proof of Theorem \ref{thm:200319-1.1} is provided in Subsection \ref{ss2.1}. However, Theorem \ref{thm:200319-1.3} is more difficult to prove.
In Subsection \ref{ss2.2}, we reduce 
matters
to the linear setting.
We divide its proof into two steps:
we consider case $p\le nq$ in Subsection \ref{ss2.3} 
and case $nq\le p$ in Subsection \ref{ss2.4}.

\subsection{Proof of Theorem \ref{thm:200319-1.1}}
\label{ss2.1}
\begin{proof}[Proof of Theorem \ref{thm:200319-1.1}]
A cube, $Q\in \mathcal{Q}$, is fixed.
We note that, according to the Lipschitz continuity of $\varphi$, the estimates
\begin{align*}
{\rm diam}(\varphi(Q))
:=
\sup_{x,\tilde{x}\in Q}|\varphi(x)-\varphi(\tilde{x})|
\le
L\sup_{x,\tilde{x}\in Q}|x-\tilde{x}|
=
\sqrt{n}L\ell(Q),
\end{align*}
hold; thus, there exist cubes $Q_1, Q_2\in \mathcal{Q}$ such that
\begin{equation*}
Q_1\supset Q, \quad
Q_2\supset \varphi(Q), \quad
|Q_1|=|Q_2|=(\max(1,\sqrt{n}L))^n|Q|.
\end{equation*}
As $\varphi$ satisfies 
condition (\ref{eq:200531-1}), 
we can apply the $L^q({\mathbb R}^n)$-boundedness 
of the composition operators (Theorem \ref{thm:200727-1}) to obtain
\begin{align*}
&|Q|^{\frac1p-\frac1q}\left(\int_Q|f(\varphi(x))|^q\,{\rm d}x\right)^{\frac1q}
\le
|Q|^{\frac1p-\frac1q}
\left(
\int_{{\mathbb R}^n}|f(\varphi(x))|^q\chi_{\varphi(Q)}(\varphi(x))\,{\rm d}x
\right)^{\frac1q}\\
&\le
|Q|^{\frac1p-\frac1q}
\cdot\|C_\varphi\|_{L^q\to L^q}
\left(\int_{{\mathbb R}^n}|f(x)|^q\chi_{\varphi(Q)}(x)\,{\rm d}x\right)^{\frac1q}\\
&\le
((\max(1,\sqrt{n}L))^{-n}|Q_1|)^{\frac1p-\frac1q}
\cdot\|C_\varphi\|_{L^q\to L^q}
\left(\int_{{\mathbb R}^n}|f(x)|^q\chi_{Q_2}(x)\,{\rm d}x\right)^{\frac1q}\\
&\le
(\max(1,\sqrt{n}L))^{-\frac np+\frac nq}\|C_\varphi\|_{L^q\to L^q}
\|f\|_{{\mathcal M}^p_q},
\end{align*}
which indicates that the composition operator $C_{\varphi}$ is bounded on $\mathcal{M}^p_q(\mathbb{R}^n)$. Moreover, by applying the equation (\ref{eq:200216-2.2}), we obtain (\ref{eq:200701-1}), which completes the proof of the theorem.
\end{proof}
\subsection{Reduction of the diffeomorphism to the linear setting}\label{ss2.2}
In the following, for a differentiable vector-valued function $\varphi=(\varphi_1,\ldots,\varphi_n)^{\rm T}$ on $\mathbb{R}^n$, we denote by $D\varphi$ the Jacobi matrix of $\varphi$, that is,
\begin{equation*}
D\varphi
\equiv
\left(\frac{\partial\varphi_i}{\partial x_j}\right)_{1\le i,j\le n}
\equiv
(\varphi_{i,j})_{1\le i,j\le n}.
\end{equation*}
In this subsection, by applying the following lemma (Lemma \ref{lem:200506-1}), we reduce the diffeomorphism $\varphi:\mathbb{R}^n\rightarrow\mathbb{R}^n$ in Theorem \ref{thm:200319-1.3} to the linear mapping $D\varphi:\mathbb{R}^n\rightarrow M_n(\mathbb{R})$. By the estimate of the singular values of the Jacobi matrix $D\varphi$, we 
will
show that $\varphi$ is bi-Lipschitz (see Proposition \ref{prop:200722-1} below).

\begin{lemma}\label{lem:200506-1}
Let $0<q\le p<\infty$.
Suppose that a diffeomorphism
$\varphi:{\mathbb R}^n \to {\mathbb R}^n$
induces a bounded composition operator $C_{\varphi}$ from
${\mathcal M}^p_q({\mathbb R}^n)$ to itself.
Then, there exists a positive constant $k>0$ such that
\[
\|C_{D\varphi(x_0)}f\|_{{\mathcal M}^p_q}=\|f(D\varphi(x_0)\cdot)\|_{{\mathcal M}^p_q}
\le K
\|f\|_{{\mathcal M}^p_q}
\]
for all $x_0 \in {\mathbb R}^n$
and
$f \in {\mathcal M}^p_q({\mathbb R}^n)$. In particular, the operator norm of $\|C_{D\varphi(x_0)}\|$ is bounded above by a constant independent of $x_0$.
\end{lemma}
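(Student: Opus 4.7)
The plan is a blow-up argument that exploits the affine symmetries of the Morrey quasi-norm. Fix $x_0 \in {\mathbb R}^n$, set $y_0 = \varphi(x_0)$ and $A = D\varphi(x_0)$, and, for each $t > 0$, define the rescaled map
\[
\psi_t(x) := t^{-1}\bigl(\varphi(x_0 + tx) - y_0\bigr).
\]
Since $\varphi$ is differentiable at $x_0$, $\psi_t(x) \to Ax$ as $t \to 0^+$, uniformly on compact subsets of ${\mathbb R}^n$. I would proceed in two stages: first, show that the operator norm of $C_{\psi_t}$ on ${\mathcal M}^p_q({\mathbb R}^n)$ is dominated by $\|C_\varphi\|_{{\mathcal M}^p_q \to {\mathcal M}^p_q}$ uniformly in $x_0$ and $t$; then pass to the limit $t \to 0^+$ to transfer this bound to $C_{A}$.

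The first stage rests on the standard affine symmetries of the Morrey quasi-norm: for $a \in {\mathbb R}^n$ and $s > 0$ one has $\|f(\cdot - a)\|_{{\mathcal M}^p_q} = \|f\|_{{\mathcal M}^p_q}$ and $\|f(s\cdot)\|_{{\mathcal M}^p_q} = s^{-n/p}\|f\|_{{\mathcal M}^p_q}$. Factoring $\psi_t = \delta_{1/t} \circ \tau_{-y_0} \circ \varphi \circ \tau_{x_0} \circ \delta_t$, where $\tau$ and $\delta$ denote translations and dilations, a direct computation using these two invariances shows that the $t^{+n/p}$ factor from the inner dilation of $f$ cancels the $t^{-n/p}$ factor from the outer dilation, leaving $\|C_{\psi_t}f\|_{{\mathcal M}^p_q} \le \|C_\varphi\|_{{\mathcal M}^p_q \to {\mathcal M}^p_q}\,\|f\|_{{\mathcal M}^p_q}$ for every $f \in {\mathcal M}^p_q({\mathbb R}^n)$.

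The second stage uses the lower semicontinuity of the Morrey quasi-norm under pointwise almost everywhere convergence, which is immediate from Fatou's lemma applied cube-by-cube. When $f$ is continuous and bounded, $f \circ \psi_t \to f \circ A$ pointwise, so
\[
\|C_A f\|_{{\mathcal M}^p_q} \le \liminf_{t \to 0^+}\|C_{\psi_t}f\|_{{\mathcal M}^p_q} \le \|C_\varphi\|_{{\mathcal M}^p_q \to {\mathcal M}^p_q}\,\|f\|_{{\mathcal M}^p_q},
\]
establishing the desired operator norm bound on a reasonable class of test functions.

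The main obstacle is extending this estimate to every $f \in {\mathcal M}^p_q({\mathbb R}^n)$: since continuous functions are not dense in the Morrey space (Remark \ref{rem:190809-1}), no standard density argument applies. I plan to address this by invoking Luzin's theorem. Given a cube $Q$, one approximates $f$ on a compact neighborhood of $A(Q) \cup \bigcup_{0 < t \le t_0}\psi_t(Q)$ up to sets of arbitrarily small Lebesgue measure by continuous functions. Combined with the fact that both $A$ and $\psi_t$ (for $t$ small) are diffeomorphisms whose Jacobians are uniformly bounded away from zero near $x_0$, this yields $f \circ \psi_t \to f \circ A$ in measure on $Q$; passing to a subsequence gives pointwise a.e.\ convergence, and Fatou on $Q$ delivers the required estimate for arbitrary $f \in {\mathcal M}^p_q({\mathbb R}^n)$.
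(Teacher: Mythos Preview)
Your blow-up argument and the scaling computation in Stage~1 are exactly what the paper does, and Stage~2 (pointwise convergence plus Fatou for smooth/continuous test functions) also matches. The divergence is in the extension to general $f\in{\mathcal M}^p_q({\mathbb R}^n)$.

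Your Luzin-based route---approximating $f$ on a bounded neighborhood of $A(Q)\cup\bigcup_{0<t\le t_0}\psi_t(Q)$, using the uniform lower bound on $|\det D\psi_t|=|\det D\varphi(x_0+t\,\cdot)|$ near $A$ to control preimages of the bad set, deducing convergence in measure on $Q$, and then applying Fatou along a subsequence---is correct, though the phrase ``near $x_0$'' should read ``on $Q$'' (it is $D\psi_t(x)=D\varphi(x_0+tx)$ that stays close to $A$ for $x\in Q$ and small $t$). Note also that the subsequence depends on $Q$, but this is harmless since you only need the localized inequality $|Q|^{1/p-1/q}\|(f\circ A)\chi_Q\|_{L^q}\le K\|f\|_{{\mathcal M}^p_q}$ for each $Q$ separately before taking the supremum.

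The paper sidesteps all of this with a much lighter device: once the bound $\|f(D\varphi(x_0)\cdot)\|_{{\mathcal M}^p_q}\le K\|f\|_{{\mathcal M}^p_q}$ is known for $f\in C^\infty_{\rm c}$, it is extended first to $f\in L^\infty_{\rm c}$ by $L^p$-approximation (using $L^p\hookrightarrow{\mathcal M}^p_q$), and then to arbitrary $f\in{\mathcal M}^p_q$ by monotone truncation $f_k\equiv f\,\chi_{[-k,k]^n}\chi_{[0,k]}(|f|)$. Since $|f_k|\le|f|$ one has $\|f_k\|_{{\mathcal M}^p_q}\le\|f\|_{{\mathcal M}^p_q}$, and $f_k\to f$ pointwise, so Fatou finishes the job. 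This completely avoids revisiting the limit $t\to0$ for non-smooth $f$, and in particular bypasses the density obstacle you flagged without invoking Luzin or any measure-theoretic control of $\psi_t$. Your argument buys a direct proof of $C_{\psi_t}f\to C_A f$ in measure for every $f$, which is a stronger intermediate statement; the paper's buys brevity.
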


\begin{proof}[Proof of Lemma \ref{lem:200506-1}]
Set $K\equiv\|C_\varphi\|_{{\mathcal M}^p_q\to{\mathcal M}^p_q}<\infty$.
First, we prove the assertion
for $f \in C^\infty_{\rm c}({\mathbb R}^n)$, where $C^\infty_{\rm c}({\mathbb R}^n)$ is the set of all smooth functions with compact support.
Let $t>0$.
We calculate
\begin{align*}
\left\|f\left(\frac{\varphi(x_0+t \cdot)-\varphi(x_0)}{t}\right)\right\|_{{\mathcal M}^p_q}
&=t^{-\frac{n}{p}}
\left\|f\left(\frac{\varphi(\cdot)-\varphi(x_0)}{t}\right)\right\|_{{\mathcal M}^p_q}\\
&\le K
t^{-\frac{n}{p}}
\left\|f\left(\frac{\cdot-\varphi(x_0)}{t}\right)\right\|_{{\mathcal M}^p_q}\\
&=K
\left\|f\right\|_{{\mathcal M}^p_q}.
\end{align*}
By letting $t \to 0$, we obtain the desired result for $f \in C^\infty_{\rm c}({\mathbb R}^n)$.

Let
$f \in L^\infty_{\rm c}({\mathbb R}^n)$, where $L^\infty_{\rm c}({\mathbb R}^n)$ is the set of all $L^\infty({\mathbb R}^n)$-functions with compact support.
Then, for any $p\in (0,\infty)$, we can choose a sequence
$\{f_j\}_{j=1}^\infty$
of
compactly supported smooth functions
such that
$f_j$ converges to $f$
in $L^p({\mathbb R}^n)$ as $j\rightarrow\infty$.
By passing to a subsequence,
we may assume that
$f_j$ converges to $f$,
almost everywhere in $\mathbb{R}^n$ as $j\rightarrow\infty$. Thus, by the Fatou lemma, the inequality
\[
\|f(D\varphi(x_0)\cdot)\|_{{\mathcal M}^p_q}
\le
\liminf_{j \to \infty}
\|f_j(D\varphi(x_0)\cdot)\|_{{\mathcal M}^p_q}
\]
holds. As we have proved the assertion for
$f_j$, we have
\[
\|f_j(D\varphi(x_0)\cdot)\|_{{\mathcal M}^p_q}
\le K
\|f_j\|_{{\mathcal M}^p_q}.
\]
As $L^p({\mathbb R}^n)$ is embedded into
${\mathcal M}^p_q({\mathbb R}^n)$ (see Remark \ref{rem:190809-1}), $f_j$ converges to $f$
in 
${\mathcal M}^p_q({\mathbb R}^n)$ as $j\rightarrow\infty$.
Consequently,
\[
\liminf_{j \to \infty}
\|f_j\|_{{\mathcal M}^p_q}
=
\|f\|_{{\mathcal M}^p_q}.
\]
By combining these observations, the following estimate holds:
\[
\|f(D\varphi(x_0)\cdot)\|_{{\mathcal M}^p_q}
\le K
\|f\|_{{\mathcal M}^p_q}.
\]

Finally, let $f \in {\mathcal M}^p_q({\mathbb R}^n)$. For $k\in \mathbb{N}$, we set
$f_k\equiv f\chi_{[-k,k]^n}\chi_{[0,k]}(|f|)\in L^{\infty}_c(\mathbb{R}^n)$.
Then, we have
\[
\|f_k(D\varphi(x_0)\cdot)\|_{{\mathcal M}^p_q}
\le K
\|f_k\|_{{\mathcal M}^p_q}
\le K
\|f\|_{{\mathcal M}^p_q}
\]
according to the previous paragraph.
By using the Fatou lemma again, we obtain
\[
\|f(D\varphi(x_0)\cdot)\|_{{\mathcal M}^p_q}
\le K
\|f\|_{{\mathcal M}^p_q},
\]
as required.
\end{proof}

Let $\varphi:\mathbb{R}^n\rightarrow\mathbb{R}^n$ be a diffeomorphism and $D\varphi:\mathbb{R}^n\rightarrow M_n(\mathbb{R})$ be the Jacobi matrix of $\varphi$. For $x_0\in \mathbb{R}^n$, the Jacobi matrix $D\varphi(x_0)$ can be decomposed by the Singular value decomposition (see Lemma \ref{lem:200805-1} below) as
\begin{equation}\label{eq:200801-1}
D\varphi(x_0)=UAV,
\end{equation}
where $A=A(x_0)={\rm diag}(\alpha_1(x_0),\dots,\alpha_n(x_0))$ is a diagonal matrix with having positive components satisfying $\alpha_1(x_0)\le\dots\le\alpha_n(x_0)$, and $U=U(x_0)$ and $V=V(x_0)$ are orthogonal matrices.

\begin{lemma}[Singular value decomposition]
\label{lem:200805-1}
Let $A$ be an $n\times n$ real regular matrix, and $\alpha_1,\ldots,\alpha_n>0$ be the singular values of $A$.
Then, there exist orthogonal matrices $U$ and $V$ such that
\begin{equation*}
UAV
=
{\rm diag}(\alpha_1,\ldots,\alpha_n).
\end{equation*}
\end{lemma}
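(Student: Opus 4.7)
The plan is to prove the Singular Value Decomposition by reducing to the spectral theorem for real symmetric matrices, applied to $A^T A$. Since $A$ is a regular (invertible) real matrix, the matrix $A^T A$ is real, symmetric, and positive definite; hence the spectral theorem provides an orthogonal matrix $P$ and positive reals $\lambda_1, \ldots, \lambda_n$ with $P^T (A^T A) P = {\rm diag}(\lambda_1, \ldots, \lambda_n)$. Setting $\alpha_i := \sqrt{\lambda_i} > 0$, these will play the role of the singular values of $A$, and matching them with the $\alpha_i$ given in the statement is a matter of labeling.

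Next, I construct the second orthogonal matrix explicitly from $P$ and $A$. Put $D := {\rm diag}(\alpha_1, \ldots, \alpha_n)$, which is invertible because all $\alpha_i > 0$, and define $Q := A P D^{-1}$. A direct computation yields
$Q^T Q = D^{-1} P^T A^T A P D^{-1} = D^{-1} D^2 D^{-1} = I$,
so $Q$, being square with orthonormal columns, is orthogonal. Rewriting $Q D = A P$ as $Q^T A P = D$ and setting $U := Q^T$ and $V := P$, I obtain the identity $U A V = {\rm diag}(\alpha_1, \ldots, \alpha_n)$ with both $U$ and $V$ orthogonal, which is exactly the conclusion.

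The only minor point is the ordering $\alpha_1 \le \cdots \le \alpha_n$ used in display (\ref{eq:200801-1}): if the spectral theorem produces the $\lambda_i$ in some other order, one may post-multiply $V$ and pre-multiply $U$ by the permutation matrix that sorts them, which preserves orthogonality. There is no genuine obstacle in this argument, as the result is a classical consequence of the spectral theorem; the lemma is invoked in the paper only to factor $D\varphi(x_0)$ into a rotation, a diagonal scaling, and another rotation, so any standard formulation of SVD is adequate for the subsequent use in Proposition \ref{prop:200722-1}.
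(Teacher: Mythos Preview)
Your proof is correct: it is the standard derivation of the singular value decomposition from the spectral theorem for the positive definite matrix $A^{\mathrm T}A$, and every step (positivity of the eigenvalues, orthogonality of $Q=APD^{-1}$, the final permutation to order the $\alpha_i$) is valid. The paper itself does not supply a proof of Lemma~\ref{lem:200805-1}; it merely states the result as a classical fact and uses it to factor $D\varphi(x_0)$, so your argument fills in exactly what the paper leaves implicit.
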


Now, by the definition of the Morrey norm
$\|\cdot\|_{{\mathcal M}^p_q}$,
and a simple computation, we have the equivalence
\begin{align*}
n^{\frac np-\frac nq}\|C_{A(x_0)}\|_{{\mathcal M}^p_q\to{\mathcal M}^p_q}
\le
\|C_{D\varphi(x_0)}\|_{{\mathcal M}^p_q\to{\mathcal M}^p_q}
\le
n^{-\frac np+\frac nq}\| C_{A(x_0)}\|_{{\mathcal M}^p_q\to{\mathcal M}^p_q}.
\end{align*}
Here, the operator norms $\|\cdot\|_{{\mathcal M}^p_q\to{\mathcal M}^p_q}$ of the composition operators induced by the orthogonal matrices are bounded above by a constant independent of the selection of the rotation matrices.  Therefore, we have the following lemma:
\begin{lemma}
\label{lem:200806}
Let $0<q\le p<\infty$.
Suppose that a diffeomorphism
$\varphi:{\mathbb R}^n \to {\mathbb R}^n$,
induces a bounded composition operator $C_{\varphi}$ on
${\mathcal M}^p_q({\mathbb R}^n)$. Let $\alpha_1(x_0), \dots,\alpha_n(x_0)$ be the singular values of $D\varphi(x_0)$, and let us denote $A(x_0):={\rm diag}(\alpha_1(x_0),\dots,\alpha_n(x_0))$.  Then,  the operator norm of $C_{A(x_0)}$ is bounded above by a constant independent of $x_0$.
\end{lemma}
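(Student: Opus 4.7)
The plan is to combine the uniform bound of Lemma~\ref{lem:200506-1} with the singular value decomposition \eqref{eq:200801-1} and an auxiliary rotation estimate for the Morrey (quasi-)norm.

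First I would establish the following auxiliary fact: for every $n\times n$ orthogonal matrix $O$, the composition operator $C_O$ is bounded on $\mathcal{M}^p_q(\mathbb{R}^n)$, with
\[
\|C_O\|_{\mathcal{M}^p_q\to\mathcal{M}^p_q}\le n^{\frac{n}{2q}-\frac{n}{2p}},
\]
the right-hand side being independent of $O$. Given a cube $Q\in\mathcal{Q}$, the change of variables $y=Ox$ gives $\|(f\circ O)\chi_Q\|_{L^q}=\|f\chi_{O(Q)}\|_{L^q}$. Since $O$ is an isometry, $\mathrm{diam}(O(Q))=\mathrm{diam}(Q)=\sqrt{n}\,\ell(Q)$, so there is an axis-parallel cube $\tilde Q\supset O(Q)$ with $\ell(\tilde Q)=\sqrt{n}\,\ell(Q)$ and hence $|\tilde Q|=n^{n/2}|Q|$. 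Inserting this into the definition of the Morrey norm and using $\tfrac{1}{p}-\tfrac{1}{q}\le 0$ then yields the claimed estimate upon taking the supremum over $Q$.

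Next, I would apply the SVD $D\varphi(x_0)=U(x_0)A(x_0)V(x_0)$. Since $C_{BC}=C_B\circ C_C$ for any invertible linear maps $B,C$, we obtain
\[
C_{A(x_0)}=C_{U(x_0)^{-1}}\circ C_{D\varphi(x_0)}\circ C_{V(x_0)^{-1}}.
\]
Applying the auxiliary rotation estimate to the orthogonal matrices $U(x_0)^{-1}$ and $V(x_0)^{-1}$, and invoking Lemma~\ref{lem:200506-1} to bound $\|C_{D\varphi(x_0)}\|_{\mathcal{M}^p_q\to\mathcal{M}^p_q}$ by a constant $K$ independent of $x_0$, I would conclude
\[
\|C_{A(x_0)}\|_{\mathcal{M}^p_q\to\mathcal{M}^p_q}\le n^{\frac{n}{q}-\frac{n}{p}}\,K,
\]
which is the desired $x_0$-independent bound. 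The same computation with the roles of $D\varphi(x_0)$ and $A(x_0)$ interchanged also gives the two-sided equivalence displayed immediately before the lemma.

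The main obstacle is the auxiliary orthogonal-invariance estimate: the Morrey quasi-norm is defined via axis-parallel cubes, so it is genuinely not invariant under arbitrary rotations, and some loss must be incurred. The observation that a rotated cube $O(Q)$ fits inside an axis-parallel cube of side length $\sqrt{n}\,\ell(Q)$ converts this failure of invariance into a clean, dimension- and $(p,q)$-only-dependent constant; the hypothesis $q\le p$ ensures that the factor $|Q|^{\frac{1}{p}-\frac{1}{q}}$ appearing in the Morrey norm absorbs the enlargement in the correct direction.
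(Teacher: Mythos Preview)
Your argument is correct and coincides with the paper's: the paper also derives Lemma~\ref{lem:200806} by combining Lemma~\ref{lem:200506-1} with the singular value decomposition \eqref{eq:200801-1} and the observation that composition by an orthogonal matrix is bounded on $\mathcal{M}^p_q(\mathbb{R}^n)$ with a norm depending only on $n,p,q$ (yielding exactly the constants $n^{\pm(n/q-n/p)}$ in the two-sided estimate preceding the lemma). One small slip: composition operators reverse products, i.e.\ $C_{BC}=C_C\circ C_B$, so the correct factorization is $C_{A(x_0)}=C_{V(x_0)^{-1}}\circ C_{D\varphi(x_0)}\circ C_{U(x_0)^{-1}}$; since both outer factors are orthogonal and satisfy the same uniform bound, this does not affect your estimate.
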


Hereafter, we use shorthand $A(x)\lesssim B(x)$ to denote estimate $A(x)\leq CB(x)$ with
some constant $C>0$ independent of $x$. Notation $A(x)\sim B(x)$ represents $A(x)\lesssim B$(x)
and $B(x)\lesssim A(x)$.

\begin{proposition}\label{prop:200605-1}
Let $0<q\le p<\infty$, $x_0\in \mathbb{R}^n$, and $\varphi:\mathbb{R}^n\rightarrow\mathbb{R}^n$ be diffeomorphism. 
If the composition operators $C_{\varphi}$ and $C_{\varphi^{-1}}$ induced by $\varphi$ and $\varphi^{-1}$, respectively are bounded on the Morrey space ${\mathcal M}^p_q({\mathbb R}^n)$, then we have
\begin{equation*}
\alpha_1(x_0)\times\cdots\times\alpha_n(x_0)\sim1,
\end{equation*}
where $\alpha_1(x_0),\dots,\alpha_n(x_0)$ are the singular values of $D\varphi(x_0)$.
\end{proposition}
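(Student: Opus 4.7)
The plan is to reduce the problem to a linear diagonal setting via Lemma \ref{lem:200806}, and then to test $C_A$ on a single function whose product-of-powers structure isolates $\det A$ as the only $A$-dependent scalar.

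By Lemma \ref{lem:200806}, applied to both $\varphi$ and $\varphi^{-1}$ (and using that the singular values of $D\varphi^{-1}(\varphi(x_0))$ are $1/\alpha_i(x_0)$), it suffices to show the following: if $A=\mathrm{diag}(\alpha_1,\ldots,\alpha_n)$ with $\alpha_i>0$ and both $\|C_A\|_{\mathcal{M}^p_q\to\mathcal{M}^p_q}$ and $\|C_{A^{-1}}\|_{\mathcal{M}^p_q\to\mathcal{M}^p_q}$ are bounded by a constant $M$, then $\det A=\alpha_1\cdots\alpha_n\sim 1$. When $q=p$, the Morrey space coincides with $L^p$, and a change-of-variables argument (or Theorem \ref{thm singh}) immediately yields $\det A\sim 1$. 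For the remaining case $q<p$, I would introduce the test function
\[
f(x) := \chi_{[-1,1]^n}(x)\prod_{i=1}^n |x_i|^{-1/p},
\]
which belongs to $\mathcal{M}^p_q(\mathbb{R}^n)$ because $q<p$ makes each factor $|x_i|^{-q/p}$ integrable at the origin.

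The decisive calculation is
\[
(C_A f)(x) = \prod_{i=1}^n|\alpha_i x_i|^{-1/p}\,\chi_{[-1,1]^n}(Ax) = (\det A)^{-1/p}\,h(x),
\]
where $h(x) := \chi_{E_A}(x)\prod_{i=1}^n|x_i|^{-1/p}$ and $E_A := A^{-1}[-1,1]^n$; that is, $C_A$ acts on $f$ as multiplication by the scalar $(\det A)^{-1/p}$ together with the replacement of the cutoff set. A direct computation on centered cubes $Q_\ell := [-\ell/2,\ell/2]^n$ shows that
\[
|Q_\ell|^{1/p-1/q}\,\|f\,\chi_{Q_\ell}\|_{L^q} = \frac{2^{n/p}}{(1-q/p)^{n/q}} =: c_0,
\]
independently of $\ell\in(0,2]$, and the same identity holds with $h$ in place of $f$ whenever $\ell\le 2/\alpha_n$ (which guarantees $Q_\ell\subset E_A$). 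Hence $\|h\|_{\mathcal{M}^p_q}\ge c_0$ uniformly in $A$, while a routine estimate on off-center and large cubes shows $\|f\|_{\mathcal{M}^p_q}<\infty$.

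Substituting into $\|C_A f\|_{\mathcal{M}^p_q}\le M\|f\|_{\mathcal{M}^p_q}$ yields $(\det A)^{-1/p}c_0\le M\|f\|_{\mathcal{M}^p_q}$, i.e.\ $\det A\gtrsim 1$; applying the identical argument to $A^{-1}$ (whose singular values are $1/\alpha_i$) gives $\det(A^{-1})\gtrsim 1$, i.e.\ $\det A\lesssim 1$. The main subtlety is the choice of the exponent $1/p$: it is precisely this value that both keeps $f$ inside $\mathcal{M}^p_q$ (requiring $q<p$) and renders the Morrey quantity $|Q_\ell|^{1/p-1/q}\|\cdot\chi_{Q_\ell}\|_{L^q}$ on centered cubes $\ell$-independent, so that the lower bound $\|h\|_{\mathcal{M}^p_q}\ge c_0$ does not degrade as $\alpha_n\to\infty$. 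Any other exponent would either destroy integrability or make the lower bound depend on the $\alpha_i$, which would prevent the clean extraction of $\det A$.
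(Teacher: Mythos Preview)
Your argument is correct but takes a different route from the paper. The paper reduces, as you do, to the diagonal matrix $A={\rm diag}(\alpha_1,\ldots,\alpha_n)$, but then proves $\det A\ge\|C_A\|_{\mathcal{M}^p_q\to\mathcal{M}^p_q}^{-p}$ (Lemma~\ref{lem:200603-1}) by a purely algebraic trick: letting $W$ be the cyclic-permutation matrix $(x_1,\ldots,x_n)\mapsto(x_2,\ldots,x_n,x_1)$, one has $\prod_{k=1}^n W^{-k}AW^k=(\det A)E$, and since each conjugate satisfies $\|C_{W^{-k}AW^k}\|=\|C_A\|$, submultiplicativity of the operator norm yields $(\det A)^{-n/p}\le\|C_A\|^n$. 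This handles all $0<q\le p<\infty$ at once, with no case split and no explicit Morrey-norm computation. Your approach instead constructs a test function $f=\chi_{[-1,1]^n}\prod_i|x_i|^{-1/p}$ whose Morrey quantity on centered cubes is scale-invariant, so that $C_A$ extracts the scalar $(\det A)^{-1/p}$ cleanly; this is more hands-on, requires verifying $\|f\|_{\mathcal{M}^p_q}<\infty$ (which is indeed routine---in fact $\|f\|_{\mathcal{M}^p_q}=c_0$ exactly), and forces a separate treatment of $q=p$ where your $f$ fails to lie in $L^p$. The paper's argument is shorter and more structural; yours has the virtue of exhibiting a concrete near-extremal function and making the role of the critical exponent $1/p$ transparent.
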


This proposition can be proved by combining Lemma \ref{lem:200506-1} and Lemma \ref{lem:200603-1} below.

\begin{lemma}\label{lem:200603-1}
Let $0<q\le p<\infty$ and $\{a_1,\ldots,a_n\}\subset \mathbb{R}_{>0}$ be a positive sequence and set $D\equiv{\rm diag}(a_1,\ldots,a_n)$. Then, the following estimate holds:
\begin{equation*}
a_1\times\cdots\times a_n
\ge
\|C_D\|_{{\mathcal M}^p_q \to {\mathcal M}^p_q}^{-p}.
\end{equation*}
\end{lemma}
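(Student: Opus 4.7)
The lemma rearranges to the lower bound $\|C_D\|_{{\mathcal M}^p_q \to {\mathcal M}^p_q} \ge (a_1 a_2 \cdots a_n)^{-1/p}$ on the operator norm, which I plan to establish by evaluating the ratio $\|C_D f\|_{{\mathcal M}^p_q}/\|f\|_{{\mathcal M}^p_q}$ for a single carefully chosen test function.

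The choice is $f \equiv \chi_R$ with $R \equiv D([-1,1]^n) = \prod_{i=1}^n [-a_i,a_i]$, the image of the unit cube under $D$. The merit of this choice is that $(C_D f)(x) = \chi_R(Dx) = \chi_{[-1,1]^n}(x)$, so $C_D f$ is the characteristic function of the unit cube; testing $\tilde Q = [-1,1]^n$ in the supremum defining the Morrey norm then gives $\|C_D f\|_{{\mathcal M}^p_q} \ge 2^{n/p}$.

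For the denominator, rather than computing $\|\chi_R\|_{{\mathcal M}^p_q}$ directly via a case analysis over intermediate cube sizes (which would require tracking $\prod_i \min(\ell,2a_i)$ against the ordered side lengths of $R$), I will invoke the embedding $L^p({\mathbb R}^n) \hookrightarrow {\mathcal M}^p_q({\mathbb R}^n)$ with constant $1$, which follows immediately from H\"older's inequality with exponents $p/q$ and its conjugate. Applied to $\chi_R \in L^p({\mathbb R}^n)$, this yields $\|\chi_R\|_{{\mathcal M}^p_q} \le \|\chi_R\|_{L^p} = |R|^{1/p} = 2^{n/p}(a_1 \cdots a_n)^{1/p}$. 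Taking the ratio produces $\|C_D\|_{{\mathcal M}^p_q \to {\mathcal M}^p_q} \ge (a_1 \cdots a_n)^{-1/p}$, equivalent to the claimed inequality.

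The main obstacle is identifying the test function. More obvious candidates such as $\chi_{[-1,1]^n}$ itself, or a homogeneous function like $|x|^{-n/p}$, detect only the extreme singular values of $D$ and yield weaker bounds along the lines of $a_{\max}^{-n/p}$, missing the determinant structure entirely. The key maneuver is to precompose on the geometric side---replacing the unit cube by $D([-1,1]^n)$---so that on the analytic side $C_D$ sends the test function back to a cube's indicator, making the numerator sharp while the denominator is controlled by the Lebesgue norm, thereby capturing the full product $a_1 \cdots a_n$.
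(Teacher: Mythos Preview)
Your argument is correct and takes a genuinely different route from the paper's. You test the operator on the single function $\chi_{D([-1,1]^n)}$: the numerator $\|C_D f\|_{{\mathcal M}^p_q}=\|\chi_{[-1,1]^n}\|_{{\mathcal M}^p_q}\ge 2^{n/p}$ is immediate, and the denominator is bounded via the norm-$1$ embedding $L^p({\mathbb R}^n)\hookrightarrow{\mathcal M}^p_q({\mathbb R}^n)$, which feeds in exactly $|R|^{1/p}=(2^n a_1\cdots a_n)^{1/p}$. The paper instead argues by symmetrization: it conjugates $D$ by the $n$ cyclic coordinate-permutation matrices $W^k$, observes that each conjugate induces a composition operator with the same ${\mathcal M}^p_q$-operator norm as $C_D$ (permutations of coordinates preserve the family of axis-parallel cubes), that the product of the conjugates equals $(a_1\cdots a_n)E$, and then combines submultiplicativity of the operator norm with the exact scaling law $\|C_{\lambda E}\|_{{\mathcal M}^p_q\to{\mathcal M}^p_q}=\lambda^{-n/p}$. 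Your approach is shorter and makes transparent why the determinant appears---it is the volume of $R$ that the $L^p$ norm sees---while the paper's approach never evaluates or bounds any explicit Morrey norm and instead exploits the permutation symmetry of the problem. Both yield the same inequality without extra constants.
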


\begin{proof}
We introduce matrix $W\in M_n(\mathbb{R})$ corresponding to the transform
\[
(x_1,x_2,\ldots,x_n) \mapsto (x_2,x_3,\ldots,x_n,x_1).
\]
Then, by a simple computation, for any $k\in \{1,\cdots,n\}$, we observe that identities
$W^{-k}DW^k={\rm diag}(a_{n-k+1},a_{n-k+2},\ldots,a_n,a_1,a_2,\ldots,a_{n-k})$
and
\[\|C_{W^{-k}DW^k}\|_{{\mathcal M}^p_q \to
{\mathcal M}^p_q}=\|C_D\|_{{\mathcal M}^p_q\to{\mathcal M}^p_q}
\]
hold. Noting that the identity
\[
\prod_{k=1}^n W^{-k}DW^k=
a_1a_2\ldots a_n E
\]
holds, where $E\in M_n(\mathbb{R})$ denotes the identity matrix, the equality
\[
\left\|C_{\overset{n}{\underset{k=1}{\prod}}W^{-k}DW^k}\right\|_{{\mathcal M}^p_q \to {\mathcal M}^p_q}
=
(a_1a_2\times\ldots\times a_n)^{-\frac{n}{p}}
\]
holds. By combining this and identity $C_{\overset{n}{\underset{k=1}{\prod}}W^{-k}DW^k}=\overset{n}{\underset{k=1}{\prod}}C_{W^{-k}DW^k}$, the conclusion of this lemma is proved.
\end{proof}

To obtain the bi-Lipschitz continuity of $\varphi$, we use the following proposition, which is obtained using the mean value theorem.

\begin{proposition}\label{prop:200722-1}
Let $\varphi:\mathbb{R}^n\rightarrow\mathbb{R}^n$ be a diffeomorphism.  Let $\alpha_1(x_0)$ be a minimal singular value.
If there exists a positive constant $C>0$ independent of $x_0$ such that for all $x_0\in{\mathbb R}^n$,
\begin{equation}
\label{goal}
\alpha_1(x_0)\ge C,
\end{equation}
then the inverse function $\varphi^{-1}$ of $\varphi$ is Lipschitz.
\end{proposition}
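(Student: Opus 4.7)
The plan is to convert the lower bound on the minimal singular value of $D\varphi$ into a uniform upper bound on the operator norm of $D(\varphi^{-1})$, and then apply the mean value inequality to $\varphi^{-1}$ along straight-line segments.

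First, I would differentiate the identity $\varphi\circ\varphi^{-1}={\rm id}_{{\mathbb R}^n}$ via the chain rule to obtain
\[
D(\varphi^{-1})(y)=(D\varphi(\varphi^{-1}(y)))^{-1}
\]
for each $y\in{\mathbb R}^n$, using the hypothesis that $\varphi^{-1}$ is differentiable. Invoking the singular value decomposition $D\varphi(x_0)=UAV$ from (\ref{eq:200801-1}), I observe that $(D\varphi(x_0))^{-1}=V^{\rm T}A^{-1}U^{\rm T}$ has singular values $1/\alpha_n(x_0)\le\dots\le 1/\alpha_1(x_0)$; in particular, its operator norm equals $1/\alpha_1(x_0)$. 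Combining this with the uniform lower bound (\ref{goal}) then yields
\[
\|D(\varphi^{-1})(y)\|_{\rm op}=\frac{1}{\alpha_1(\varphi^{-1}(y))}\le\frac{1}{C}
\]
for every $y\in{\mathbb R}^n$.

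Next, for arbitrary $y,z\in{\mathbb R}^n$, I would apply the standard mean value inequality to the differentiable map $\varphi^{-1}:{\mathbb R}^n\to{\mathbb R}^n$ along the segment $\gamma(t)=z+t(y-z)$, $t\in[0,1]$. This gives
\[
|\varphi^{-1}(y)-\varphi^{-1}(z)|\le|y-z|\sup_{t\in[0,1]}\|D(\varphi^{-1})(\gamma(t))\|_{\rm op}\le\frac{1}{C}|y-z|,
\]
which is exactly the Lipschitz estimate claimed, with constant $1/C$. The whole argument reduces to a routine application of the mean value inequality once the singular-value hypothesis has been translated into an operator-norm bound on $D(\varphi^{-1})$, so I do not anticipate any serious obstacle; the only mild point is that the translation step, although elementary, is precisely where the assumption on the \emph{minimal} singular value (rather than, say, the determinant) is used in an essential way.
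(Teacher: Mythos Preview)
Your proposal is correct and follows essentially the same route as the paper: bound the derivative of $\varphi^{-1}$ via the singular values of $D\varphi$, then apply the mean value estimate along line segments. The only cosmetic differences are that the paper uses the Frobenius norm (yielding the slightly larger constant $\sqrt{n}/C$) instead of your operator norm, and phrases the mean value step as an equality at a single point rather than the inequality you use; your formulation is in fact the technically cleaner one for vector-valued maps.
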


\begin{proof}[Proof of Proposition \ref{prop:200722-1}]
$x,\tilde{x}\in{\mathbb R}^n$ are fixed.
As mapping $\varphi^{-1}$ is differentiable on the line segment between $x$ and $\tilde{x}$, by the mean value
theorem, we can consider point $x_0$ on the line segment between $x$ and $\tilde{x}$ and obtain
\begin{align*}
|\varphi^{-1}(x)-\varphi^{-1}(\tilde{x})|
=
\|D\varphi^{-1}(x_0)\|_F|x-\tilde{x}|,
\end{align*}
where the quantity $\|A\|_F$ is a Frobenius norm defined by $\sqrt{{\rm tr}(A^{\rm T}A)}$ for matrix $A$.
Now, using the decomposition (\ref{eq:200801-1}), we can calculate
\begin{align*}
\|D\varphi^{-1}(x_0)\|_F
=
\left(\sum_{j=1}^n\left|\frac1{\alpha_j(x_0)}\right|^2\right)^{\frac12}
\le
\frac{\sqrt{n}}C.
\end{align*}
Consequently, we obtain the Lipschitz continuity of $\varphi^{-1}$.
\end{proof}

According to this proposition, to obtain Theorem \ref{thm:200319-1.3}, it suffices to show that there exists a positive constant $C>0$ such that for each $x_0\in{\mathbb R}^n$, the estimate (\ref{goal}) holds. We divide the proof of (\ref{goal}) into the two cases $p\le nq$ and $nq\le p$.

\subsection{Proof of (\ref{goal}) in the case $p\le nq$}\label{ss2.3}

To obtain the estimate (\ref{goal}), we estimate the operator norm of the diagonal matrices $A(x_0)$ in the decomposition (\ref{eq:200801-1}) as follows using Lemma \ref{lem:200626-2} and Proposition \ref{prop:200626-2} below.

\begin{lemma}\label{lem:200626-2}
Let $n\ge m\ge2$, $0<q\le\dfrac nmq\le p\le\dfrac n{m-1}q<\infty$ and $1\le a_1\le\cdots\le a_{n-1}$.
Then, we have
\begin{align*}
\lefteqn{
\|\chi_{[0,1]\times[0,a_1]\times\cdots\times[0,a_{n-1}]}\|_{{\mathcal M}^p_q}
}\\
\nonumber
&=
a_1^{\frac1q}\cdots a_{m-1}^{\frac1q}a_{m-1}^{\frac np-\frac mq}.
\end{align*}
\end{lemma}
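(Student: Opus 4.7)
The plan is to reduce the Morrey norm computation to a one-dimensional optimization in the side length $\ell$ of the cube $Q$. Write the box as $R = [0,s_1] \times \cdots \times [0,s_n]$ with $s_1 = 1,\ s_2 = a_1,\ \ldots,\ s_n = a_{n-1}$, so that the hypothesis $1 \le a_1 \le \cdots \le a_{n-1}$ gives $0 < s_1 \le s_2 \le \cdots \le s_n$. Since the function is a characteristic function, $\|\chi_R\chi_Q\|_{L^q}^{q} = |Q \cap R|$. Both $R$ and any cube $Q$ are axis-parallel, so the intersection factorises over coordinates, and translating $Q$ along each axis independently yields
\[
\sup_{Q:\ell(Q)=\ell}|Q \cap R| = \prod_{i=1}^{n}\min(\ell, s_i).
\]
Plugging this into the definition of the Morrey norm reduces the problem to computing
\[
\|\chi_R\|_{{\mathcal M}^p_q} = \sup_{\ell>0} \ell^{n/p-n/q}\left(\prod_{i=1}^{n}\min(\ell,s_i)\right)^{1/q}.
\]

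Next I would decompose the supremum according to which interval $[s_k,s_{k+1}]$ contains $\ell$, using the conventions $s_0 = 0$ and $s_{n+1} = +\infty$. On this interval the product simplifies to $(s_1 s_2 \cdots s_k)\,\ell^{n-k}$, so the quantity inside the supremum reduces to
\[
G_k(\ell) := (s_1 s_2 \cdots s_k)^{1/q}\,\ell^{n/p-k/q}.
\]
Each $G_k$ is a pure power of $\ell$, and its monotonicity is controlled by the sign of the exponent $n/p-k/q$. The hypothesis $\frac{n}{m}q \le p \le \frac{n}{m-1}q$ is equivalent to $m-1 \le nq/p \le m$, so $n/p - k/q \ge 0$ for $k \le m-1$ and $n/p - k/q \le 0$ for $k \ge m$. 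Therefore the composite function is non-decreasing on $[s_{k-1},s_k]$ for every $k \le m$ and non-increasing on $[s_k,s_{k+1}]$ for every $k \ge m$, which places the global maximum exactly at the break point $\ell = s_m = a_{m-1}$.

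Evaluating $G_m$ at $\ell = s_m$ yields
\[
(s_1 \cdots s_m)^{1/q}\,s_m^{n/p-m/q} = (a_1 \cdots a_{m-1})^{1/q}\,a_{m-1}^{n/p - m/q},
\]
which is the claimed identity. The main obstacle is conceptual bookkeeping rather than genuine analysis: one must set up the indexing carefully and check the boundary cases $p = \frac{n}{m}q$ and $p = \frac{n}{m-1}q$, in which the exponent $n/p - k/q$ vanishes for $k=m$ or $k=m-1$ and the corresponding $G_k$ becomes constant on its interval. In each degenerate case the supremum is still attained at $\ell = s_m$ and produces the same value, so the formula remains valid across the full range of $p$ allowed by the hypothesis.
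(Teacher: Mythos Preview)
Your proof is correct and follows essentially the same approach as the paper's: both reduce the Morrey norm to a one-variable supremum
\[
\sup_{\ell>0}\ell^{n/p-n/q}\Bigl(\prod_{i=1}^{n}\min(\ell,s_i)\Bigr)^{1/q}
\]
and then locate the maximiser among the break points $s_1,\ldots,s_n$. The only cosmetic difference is that the paper simply evaluates at $R=1,a_1,\ldots,a_{n-1}$ and compares the resulting finite list term by term, whereas you argue via piecewise monotonicity of the power functions $G_k$; your version makes the reason the supremum lands at $\ell=a_{m-1}$ slightly more transparent, but the content is the same.
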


\begin{proof}
As we have to consider only cubes
of form $[0,R]^n$ for $R>0$
as the candidates for supremum
in the Morrey norm $\|\cdot\|_{\mathcal{M}^p_q}$, we have identity
\[
\|\chi_{[0,1]\times[0,a_1]\times\cdots\times[0,a_{n-1}]}\|_{{\mathcal M}^p_q}
=
\sup_{R>0}
R^{\frac{n}{p}-\frac{n}{q}}\{
\min(1,R)\min(a_1,R)\cdots\min(a_{n-1},R)\}^{\frac1q}
\]
By considering the case of $R=1,a_1,\ldots,a_{n-1}$, we can determine the supremum on the right-hand side of the above identity as follows:
\begin{equation}\label{eq:200801-2}
\|\chi_{[0,1]\times[0,a_1]\times\cdots\times[0,a_{n-1}]}\|_{{\mathcal M}^p_q}
=
\max\left(1,a_1^{\frac np-\frac1q},a_1^{\frac1q}a_2^{\frac np-\frac2q},\ldots,a_1^{\frac1q}\cdots a_{n-2}^{\frac1q}a_{n-1}^{\frac np-\frac{n-1}{q}}\right).
\end{equation}

Here, according to assumption $p\le\dfrac n{m-1}q$, we observe that
\begin{align*}
a_1^{\frac1q}\cdots a_{m-1}^{\frac1q}a_{m-1}^{\frac np-\frac mq}
\ge
a_1^{\frac1q}\cdots a_{m-2}^{\frac1q}a_{m-2}^{\frac np-\frac{m-1}q}
\ge
\cdots
\ge
a_1^{\frac np-\frac1q}
\ge
1.
\end{align*}
According to the assumption $\dfrac nmq\le p$,
we calculate
\begin{align*}
a_1^{\frac1q}\cdots a_{m-1}^{\frac1q}a_{m-1}^{\frac np-\frac mq}
\ge
a_1^{\frac1q}\cdots a_m^{\frac1q}a_m^{\frac np-\frac{m+1}q}
\ge
\cdots
\ge
a_1^{\frac1q}\cdots a_{n-1}^{\frac1q}a_{n-1}^{\frac np-\frac nq},
\end{align*}
to conclude that quantity $\displaystyle a_1^{\frac1q}\cdots a_{m-1}^{\frac1q}a_{m-1}^{\frac np-\frac mq}$ is the largest when taking the maximum in the equation (\ref{eq:200801-2}).
Hence, this is the desired result.
\end{proof}

\begin{proposition}\label{prop:200626-2}
Let $n\ge m\ge2$, $0<q\le\dfrac nmq\le p\le\dfrac n{m-1}q<\infty$ and $1\le a_1\le\cdots\le a_{n-1}$.
Then, we have
\[
\|C_{{\rm diag}(1,a_1,\ldots,a_{n-1})}\|_{{\mathcal M}^p_q \to {\mathcal M}^p_q}
\ge
a_1^{-\frac1q}\cdots a_{m-1}^{-\frac1q}a_{m-1}^{-\frac np+\frac mq}.
\]
\end{proposition}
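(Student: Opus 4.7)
The plan is to establish this lower bound by exhibiting a single well-chosen test function. Since the norm $\|C_D\|_{{\mathcal M}^p_q \to {\mathcal M}^p_q}$ for $D = {\rm diag}(1,a_1,\ldots,a_{n-1})$ is by definition a supremum of $\|C_D f\|_{{\mathcal M}^p_q}/\|f\|_{{\mathcal M}^p_q}$, it suffices to produce one $f$ achieving the claimed ratio. Lemma \ref{lem:200626-2} essentially tells us what that $f$ should be: the indicator of a rectangle whose Morrey norm matches the target denominator.

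Concretely, I would choose
\[
f \equiv \chi_{[0,1]\times[0,a_1]\times\cdots\times[0,a_{n-1}]}.
\]
The key computation is that $C_D f$ simplifies enormously: since $D$ multiplies the $j$-th coordinate by the same factor that rescales the $j$-th side of the rectangle, we obtain $C_D f(x) = f(Dx) = \chi_{[0,1]^n}(x)$. A direct evaluation of the Morrey norm on the unit cube, by checking the supremum over cubes $[0,R]^n$ for $R \le 1$ and $R \ge 1$ separately, gives $\|\chi_{[0,1]^n}\|_{{\mathcal M}^p_q} = 1$, using $p \ge q$ to control the large-$R$ regime.

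Plugging into the operator-norm inequality and invoking Lemma \ref{lem:200626-2} for the denominator then yields
\[
\|C_D\|_{{\mathcal M}^p_q \to {\mathcal M}^p_q}
\ge
\frac{\|C_D f\|_{{\mathcal M}^p_q}}{\|f\|_{{\mathcal M}^p_q}}
=
\frac{1}{a_1^{\frac1q}\cdots a_{m-1}^{\frac1q}a_{m-1}^{\frac np-\frac mq}}
=
a_1^{-\frac1q}\cdots a_{m-1}^{-\frac1q}a_{m-1}^{-\frac np+\frac mq},
\]
which is exactly the desired inequality.

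There is really no substantive obstacle once Lemma \ref{lem:200626-2} is in hand; the only subtle point is recognizing the correct test function, which is precisely the rectangle whose Morrey norm is determined by the index $m$ specified through the constraint $\tfrac{n}{m}q \le p \le \tfrac{n}{m-1}q$. The condition $1 \le a_1 \le \cdots \le a_{n-1}$ ensures that the side lengths are nondecreasing, matching the hypothesis of Lemma \ref{lem:200626-2}, so the computation of $\|f\|_{{\mathcal M}^p_q}$ goes through verbatim.
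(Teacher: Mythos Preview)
Your proof is correct and takes essentially the same approach as the paper: both use a rectangular indicator as a test function and invoke Lemma~\ref{lem:200626-2} to evaluate the relevant Morrey norms. Your choice $f=\chi_{[0,1]\times[0,a_1]\times\cdots\times[0,a_{n-1}]}$ is simply the special case $R_i=a_i$ of the paper's family $\chi_{[0,1]\times[0,R_1]\times\cdots\times[0,R_{n-1}]}$ with $1\le a_i^{-1}R_i$, and your direct computation $\|\chi_{[0,1]^n}\|_{{\mathcal M}^p_q}=1$ replaces a second application of Lemma~\ref{lem:200626-2}.
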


\begin{proof}
Them, we use
\[
\|\chi_{[0,1]\times[0,R_1]\times\cdots\times[0,R_{n-1}]}\|_{{\mathcal M}^p_q}
=
R_1^{\frac1q}\cdots R_{m-1}^{\frac1q}R_{m-1}^{\frac np-\frac mq},
\]
and
\[
\|\chi_{[0,1]\times[0,a_1^{-1}R_1]\times\cdots\times[0,a_{n-1}^{-1}R_{n-1}]}\|_{{\mathcal M}^p_q}
=
(a_1^{-1}R_1)^{\frac1q}\cdots(a_{m-1}^{-1}R_{m-1})^{\frac1q}(a_{m-1}^{-1}R_{m-1})^{\frac np-\frac mq}
\]
for $1\le R_1\le\cdots\le R_{n-1}$ with $1\le a_1^{-1}R_1\le\cdots\le a_{n-1}^{-1}R_{n-1}$.
\end{proof}

Now, we prove the estimate (\ref{goal}).
It suffices to consider case $\dfrac nmq<p\le\dfrac n{m-1}q$, for each $m=2,\ldots,n$.
According to 
Lemma \ref{lem:200806} and Proposition \ref{prop:200626-2}, we calculate
\begin{align}\label{eq:200626-5}
1
&\gtrsim
\|C_{A(x_0)}\|_{{\mathcal M}^p_q\to{\mathcal M}^p_q}\\
&\ge
\alpha_1(x_0)^{-\frac np}
\prod_{i\in I}
\left(\frac{\alpha_i(x_0)}{\alpha_1(x_0)}\right)^{-\frac1q}
\cdot
\left(
\frac{\alpha_j(x_0)}{\alpha_1(x_0)}
\right)^{-\frac np+\frac mq} \notag\\
&=
\left(\alpha_1(x_0)\prod_{i\in I}\alpha_i(x_0)\right)^{-\frac1q}
\alpha_j(x_0)^{-\frac np+\frac mq}, \notag
\end{align}
where $I$ is a subset of $\{2,\ldots,n\}$ such that $\sharp I=m-1$ and $j\in I$.
Combining the estmate (\ref{eq:200626-5}) and Proposition \ref{prop:200605-1}, we have
\begin{align*}
1
&\gtrsim
\prod_{\substack{I\subset\{2,\ldots,n\}\\ \sharp I=m-1}}\prod_{j\in I}
\left(
\left(\alpha_1(x_0)\prod_{i\in I}\alpha_i(x_0)\right)^{-\frac1q}
\alpha_j(x_0)^{-\frac np+\frac mq}
\right)\\
&=
\alpha_1(x_0)^{-\frac{m-1}q\binom{n-1}{m-1}}
(
\alpha_2(x_0)\cdots\alpha_n(x_0)
)^{\left(-\frac np+\frac1q\right)\binom{n-2}{m-2}}\\
&\sim
\alpha_1(x_0)^{
-\frac{m-1}q\binom{n-1}{m-1}+\left(\frac np-\frac1q\right)\binom{n-2}{m-2}
}
=
\alpha_1(x_0)^{\left(\frac np-\frac nq\right)\binom{n-2}{m-2}}
\end{align*}
and then
\begin{align*}
\alpha_1(x_0)\gtrsim1.
\end{align*}

\subsection{Proof of (\ref{goal}) in the case $nq\le p$}\label{ss2.4}

Let $nq\le p$.
Using Lemmas \ref{lem:200806} and \ref{lem:200603-3}, we 
obtain the estimate (\ref{goal}).
To prove Lemma \ref{lem:200603-3}, 
we will use Lemma \ref{lem:200603-2} below.

\begin{lemma}\label{lem:200603-2}
Let $0<q<n q \le p<\infty$.
Then,
$
\chi_{[0,1]\times{\mathbb R}^{n-1}} \in {\mathcal M}^p_q({\mathbb R}^n).
$
\end{lemma}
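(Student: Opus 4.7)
The plan is to estimate the Morrey norm $\|\chi_{[0,1] \times \mathbb{R}^{n-1}}\|_{\mathcal{M}^p_q}$ by direct computation, exploiting the product structure of the set, which constrains only the first coordinate. The key geometric observation is that for any axis-parallel cube $Q$ of side length $\ell$, the intersection $Q \cap ([0,1] \times \mathbb{R}^{n-1})$ has Lebesgue measure at most $\min(\ell,1) \cdot \ell^{n-1}$: the first coordinate of $Q$ overlaps $[0,1]$ in an interval of length at most $\min(\ell,1)$, while the remaining $n-1$ coordinates contribute $\ell$ each without restriction.

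Using this, for any such $Q$, the quantity inside the supremum defining $\|\cdot\|_{\mathcal{M}^p_q}$ obeys
\[
|Q|^{\frac{1}{p}-\frac{1}{q}} \|\chi_{[0,1]\times\mathbb{R}^{n-1}}\chi_Q\|_{L^q}
\le \ell^{\frac{n}{p}-\frac{n}{q}} \bigl(\min(\ell,1)\cdot \ell^{n-1}\bigr)^{\frac{1}{q}}
= \ell^{\frac{n}{p}-\frac{1}{q}}\,\min(\ell,1)^{\frac{1}{q}}.
\]
I would then split into two cases according to the size of $\ell$. For $\ell \le 1$, the right-hand side simplifies to $\ell^{n/p}$, which is bounded by $1$. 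For $\ell \ge 1$, it reduces to $\ell^{n/p - 1/q}$, and the hypothesis $nq \le p$ is precisely the condition $\frac{n}{p} - \frac{1}{q} \le 0$, so this factor is also bounded by $1$.

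Taking the supremum over $Q \in \mathcal{Q}$ in both regimes yields $\|\chi_{[0,1]\times\mathbb{R}^{n-1}}\|_{\mathcal{M}^p_q} \le 1 < \infty$, which proves membership in $\mathcal{M}^p_q(\mathbb{R}^n)$. There is no substantive obstacle in this argument; the content of the lemma is essentially the observation that the scaling exponent $\frac{n}{p} - \frac{1}{q}$ governing large cubes is nonpositive exactly when $nq \le p$, and this is what distinguishes the Morrey setting from the Lebesgue setting (where the same function would fail to lie in $L^p(\mathbb{R}^n)$ for any finite $p$).
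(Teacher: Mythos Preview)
Your proof is correct and follows essentially the same approach as the paper: both arrive at the expression $\ell^{n/p-1/q}\min(\ell,1)^{1/q}$ and bound it by $1$ via the case split $\ell\le 1$ versus $\ell\ge 1$, using $nq\le p$ for the large-cube regime. The only cosmetic difference is that the paper restricts attention to cubes of the form $[0,R]^n$ and thereby records the Morrey norm as an equality, whereas you bound over all cubes and obtain the inequality $\le 1$, which is all that is needed.
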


\begin{proof}
We calculate
\[
\|\chi_{[0,1]\times{\mathbb R}^{n-1}}\|_{{\mathcal M}^p_q}
=
\sup_{R>0}
R^{\frac{n}{p}-\frac{n}{q}}\min(1,R)^{\frac1q}R^{\frac{n-1}{q}}
=
\sup_{R>0}
R^{\frac{n}{p}-\frac{1}{q}}\min(1,R)^{\frac1q}=1<\infty.
\]
\end{proof}

\begin{lemma}\label{lem:200603-3}
Let $0<a_1\le\cdots\le a_n$.
We assume that
$D={\rm diag}(a_1,a_2,\ldots,a_n)$
induces a bounded composition operator on the Morrey space 
${\mathcal M}^p_q({\mathbb R}^n)$ with the operator norm $M$.
Moreover, we assume that
$0<q<n q \le p<\infty$.
Then,
$a_1 \ge M^{-\frac{p}{n}}$.
\end{lemma}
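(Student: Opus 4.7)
The plan is to test the operator $C_D$ on the specific function $f = \chi_{[0,1]\times\mathbb{R}^{n-1}}$, which by Lemma \ref{lem:200603-2} lies in $\mathcal{M}^p_q(\mathbb{R}^n)$ with $\|f\|_{\mathcal{M}^p_q} = 1$ precisely under the hypothesis $nq \le p$. This is exactly the kind of ``specific function composed of a characteristic function'' advertised in the abstract, so it is natural that it should be the right test object here.

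The first step is to identify $C_D f$. Since $D = \operatorname{diag}(a_1,\ldots,a_n)$ only scales coordinates and the set $[0,1]\times\mathbb{R}^{n-1}$ is unbounded in every coordinate direction except the first, one finds
\[
C_D f(x) = \chi_{[0,1]}(a_1 x_1) = \chi_{[0,1/a_1]\times\mathbb{R}^{n-1}}(x).
\]
The second step is to compute $\|C_D f\|_{\mathcal{M}^p_q}$. Because the function depends only on $x_1$ and is translation invariant in the remaining variables, one may restrict the supremum in the Morrey norm to cubes of the form $[0,R]^n$ (after translating in $x_2,\ldots,x_n$), giving
\[
\|\chi_{[0,1/a_1]\times\mathbb{R}^{n-1}}\|_{\mathcal{M}^p_q}
= \sup_{R>0} R^{\frac{n}{p}-\frac{1}{q}}\min(R,1/a_1)^{\frac{1}{q}}.
\]
Splitting into $R \le 1/a_1$ and $R \ge 1/a_1$, the first branch gives $R^{n/p}$ (increasing on $R \le 1/a_1$) and the second branch gives $a_1^{-1/q} R^{n/p-1/q}$ which, thanks to the standing assumption $nq \le p$ (hence $n/p - 1/q \le 0$), is nonincreasing in $R$. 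Consequently the supremum is attained at $R = 1/a_1$ and equals $a_1^{-n/p}$.

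The third and final step is to invoke the assumed operator norm bound:
\[
a_1^{-n/p} = \|C_D f\|_{\mathcal{M}^p_q} \le M \|f\|_{\mathcal{M}^p_q} = M,
\]
which rearranges to $a_1 \ge M^{-p/n}$, as desired. The only potentially subtle point is confirming that in the Morrey-norm supremum defining $\|C_D f\|_{\mathcal{M}^p_q}$ it really suffices to consider cubes anchored at the origin in the $x_1$ coordinate and of the form $[0,R]^n$; this is the same reduction implicit in the proof of Lemma \ref{lem:200603-2} and uses nothing more than the translation invariance of $\chi_{[0,1/a_1]\times\mathbb{R}^{n-1}}$ in the last $n-1$ variables together with the monotonicity of the integral of $\chi_{[0,1/a_1]}(x_1)$ over an interval of fixed length as that interval slides inside $[0,1/a_1]$.
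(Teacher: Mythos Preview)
Your proof is correct and follows essentially the same approach as the paper: both test $C_D$ on the function $\chi_{[0,1]\times\mathbb{R}^{n-1}}$ furnished by Lemma~\ref{lem:200603-2}, observe that $C_D$ applied to it yields $\chi_{[0,a_1^{-1}]\times\mathbb{R}^{n-1}}$, and conclude $a_1^{-n/p}\le M$. The only cosmetic difference is that the paper obtains $\|\chi_{[0,a_1^{-1}]\times\mathbb{R}^{n-1}}\|_{\mathcal{M}^p_q}=a_1^{-n/p}\|\chi_{[0,1]\times\mathbb{R}^{n-1}}\|_{\mathcal{M}^p_q}$ via the dilation invariance of the Morrey norm rather than by your direct computation over cubes $[0,R]^n$.
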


\begin{proof}
Note that
$\chi_{[0,1]\times{\mathbb R}^{n-1}} \circ (a_1 E)=
\chi_{[0,a_1{}^{-1}]\times{\mathbb R}^{n-1}}=
\chi_{[0,1]\times{\mathbb R}^{n-1}} \circ D$.
Using scaling, we calculate
\begin{align*}
a_1{}^{-\frac{n}{p}}
\|\chi_{[0,1]\times{\mathbb R}^{n-1}}\|_{{\mathcal M}^p_q}
&=
\|\chi_{[0,1]\times{\mathbb R}^{n-1}} \circ (a_n E)\|_{{\mathcal M}^p_q}\\
&=
\|\chi_{[0,1]\times{\mathbb R}^{n-1}} \circ D\|_{{\mathcal M}^p_q}\\
&\le M
\|\chi_{[0,1]\times{\mathbb R}^{n-1}}\|_{{\mathcal M}^p_q}.
\end{align*}
Thus, according to Lemma \ref{lem:200603-2}, this is the desired result.
\end{proof}

\section{Examples}\label{s3}

In this section, we present some examples and counterexamples.
In Example \ref{ex:200312-1}, the mapping inducing the composition operator satisfies the assumption in Theorem \ref{thm:200319-1.1}.
In Example \ref{ex:200415-1}, the mapping inducing the composition operator does not satisfy the assumption in Theorem \ref{thm:200319-1.1}; however, the composition operator is bounded on the Morrey spaces.
Example \ref{ex:200603-1} presents a counterexample of cases $n\ge2$ and $q=p$ in Theorem \ref{thm:200319-1.3}.

\begin{example}\label{ex:200312-1}
The affine map $\varphi$, written as $\varphi(x)=Ax+b$ for some $A\in{\rm GL}(n;{\mathbb R})$ and $b\in{\mathbb R}^n$ induces the composition operator $C_\varphi$ bounded on the Morrey space ${\mathcal M}^p_q({\mathbb R}^n)$
whenever $0<q\le p<\infty$.
This follows from the fact that mapping $\varphi$ satisfies the assumption of Theorem \ref{thm:200319-1.1}.
\end{example}

\begin{example}\label{ex:200415-1}
Let $n=1$ and $1<p<\infty$.
Then, the composition operator induced by $\varphi:{\mathbb R}\to{\mathbb R}$,
\begin{equation*}
\varphi(x)
\equiv
\begin{cases}
e^x-1, & \mbox{if $x\ge0$},\\
x, & \mbox{if $x<0$},
\end{cases}
\end{equation*}
is bounded on ${\mathcal M}^p_1({\mathbb R})$ and $\varphi$ satisfies the volume estimate (\ref{eq:200531-1}); however, $\varphi$ is not Lipschitz.

Here, we prove that the composition mapping $C_{\varphi}$ induced by $\varphi:{\mathbb R}\to{\mathbb R}$ is bounded on ${\mathcal M}^p_1({\mathbb R})$.
It suffices to show that, for all $a\ge0$ and $b>0$,
\begin{equation}\label{eq:200216-3}
(b-a)^{\frac1p-1}\int_a^b|C_\varphi f(x)|\,{\rm d}x
\lesssim
\|f\|_{{\mathcal M}^p_1}
\end{equation}

Now, we check inequality (\ref{eq:200216-3}).
If $0<b-a\le1$, through the change of variables as $y=e^x-1$ and the fact that $e^b-e^a\sim e^a(b-a)$, we obtain
\begin{align*}
&(b-a)^{\frac1p-1}\int_a^b|C_\varphi f(x)|\,{\rm d}x
=
(b-a)^{\frac1p-1}\int_{e^a-1}^{e^b-1}|f(y)|\,\frac{{\rm d}y}{y+1}\\
&\le
\{e^a(b-a)\}^{\frac1p-1}\cdot e^{-\frac ap}\int_{e^a-1}^{e^b-1}|f(y)|\,{\rm d}y
\lesssim
\|f\|_{{\mathcal M}^p_1}.
\end{align*}
Furthermore, when $b-a>1$, or equivalently, $(b-a)^{-1}<1$, we calculate
\begin{align*}
&(b-a)^{\frac1p-1}\int_a^b|C_\varphi f(x)|\,{\rm d}x
\le
\sum_{j=0}^\infty\int_{2^j-1}^{2^{j+1}-1}|f(y)|\,\frac{{\rm d}y}{y+1}\\
&\le
\sum_{j=0}^\infty2^{-\frac jp}\cdot(2^j)^{\frac1p-1}\int_{2^j-1}^{2^{j+1}-1}|f(y)|\,{\rm d}y
\lesssim
\|f\|_{{\mathcal M}^p_1}
\end{align*}
as desired.
\end{example}

\begin{example}\label{ex:200603-1}
Let
\[
\varphi(x_1,x_2)\equiv\left(x_1{}^3+x_1,\dfrac{x_2}{3x_1{}^2+1}\right)
\]
be a diffeomorphism on ${\mathbb R}^2$. 
Let us consider the boundedness of $C_\varphi$ 
on ${\mathcal M}^p_q({\mathbb R}^2)$.
In the case of $p=q$, $C_\varphi$ is bounded, $D\varphi(x_1,x_2)$ has determinant $1$.
In contrast, in the case of $p>q$, $C_\varphi$ is not bounded; in fact, the first component is not Lipschitz.
\end{example}

\section{Boundedness of composition operators on weak type spaces}\label{s4}

To prove Theorem \ref{thm:200503-1}, we use the following identity.

\begin{remark}\label{rem:200503-2}
Through a simple calculation, we have $\|\chi_E\|_{{\rm W}B}=\|\chi_E\|_B$ for all measurable sets $E$ in ${\mathbb R}^n$.
\end{remark}

\begin{proof}
First, we assume that the composition operator $C_\varphi$ is bounded on ${\rm W}B({\mathbb R}^n)$, that is, there exists a constant $K$ such that the estimate
\[
\|C_\varphi f\|_{{\rm W}B}
\le K \|f\|_{{\rm W}B}
\]
holds for any $f\in {\rm W}B({\mathbb R}^n)$. Then, upon choosing $f=\chi_E$, the estimates
\begin{align*}
\|\chi_{\varphi^{-1}(E)}\|_{{\rm W}B}
=
\|C_\varphi\chi_E\|_{{\rm W}B}
\le K
\|\chi_E\|_{{\rm W}B}
\end{align*}
hold. By using Remark \ref{rem:200503-2},
we conclude that
\begin{align*}
\|\chi_{\varphi^{-1}(E)}\|_B
\le K
\|\chi_E\|_B.
\end{align*}
Second, we assume the condition (\ref{eq:200503-1}).
Considering $E=\{x\in{\mathbb R}^n:|f(x)|>\lambda\}$, we have
\begin{align*}
\|C_\varphi f\|_{{\rm W}B}
=
\sup_{\lambda>0}\lambda\|\chi_{\varphi^{-1}(\{x\in X:|f(x)|>\lambda\})}\|_B
\le K
\sup_{\lambda>0}\lambda\|\chi_{\{x\in X:|f(x)|>\lambda\}}\|_B
=
\|f\|_{{\rm W}B}.
\end{align*}

Finally, the equation
\begin{equation}\label{eq:200720-2}
\|C_\varphi\|_{{\rm W}B\to {\rm W}B}
\le
\sup_E\frac{\|\chi_{\varphi^{-1}(E)}\|_B}{\|\chi_E\|_B}
\end{equation}
is trivial.
According to the definition of the operator norm $\|\cdot\|_{{\rm W}B\to {\rm W}B}$,
\begin{equation}\label{eq:200720-3}
\|C_\varphi\|_{{\rm W}B\to {\rm W}B}
\ge
\sup_E\frac{\|\chi_{\varphi^{-1}(E)}\|_B}{\|\chi_E\|_B}
\end{equation}
Combining these estimates (\ref{eq:200720-2}) and (\ref{eq:200720-3}), we obtain equation (\ref{eq:200720-1}).
\end{proof}

The weak type spaces generated by the Banach lattice are essential.

\begin{definition}\label{def:Banach lattice}
We say that a Banach space $(B({\mathbb R}^n),\|\cdot\|_B)$ contained in $L^0({\mathbb R}^n)$ is a Banach lattice 
if the inequality $\|f\|_B\le \|g\|_B$ holds for all $f,g\in B({\mathbb R}^n)$ that satisfies $|f|\le|g|$, a.e..
\end{definition}

\begin{remark}
If $B({\mathbb R}^n)$ is a Banach lattice (see Definition \ref{def:Banach lattice}), then $B({\mathbb R}^n)$ is embedded in ${\rm W}B({\mathbb R}^n)$.
\end{remark}

Now, 
as the special case of the Morrey space $B({\mathbb R}^n)={\mathcal M}^p_q({\mathbb R}^n)$,
in Theorem \ref{thm:200503-1}, we have Theorem \ref{thm:200303-1}.

In Theorem \ref{thm:200303-1}, through real interpolation, it is known that
\begin{equation*}
{\rm W}{\mathcal M}^p_q({\mathbb R}^n)=[{\mathcal M}^{pr}_{qr}({\mathbb R}^n),L^\infty({\mathbb R}^n)]_{1-r,\infty}
\end{equation*}
(see \cite{CwGu00}).
Here, as the $L^\infty({\mathbb R}^n)$-boundedness of the composition operators is trivial and the ${\mathcal M}^p_q({\mathbb R}^n)$-boundedness 
and ${\mathcal M}^{pr}_{qr}({\mathbb R}^n)$-boundedness of composition operators, for $r>0$, are equivalent owing to the fact that $|C_\varphi f|^r=C_\varphi[|f|^r]$ for mapping $\varphi$, then we obtain that the boundedness $\lq\lq C_\varphi:{\mathcal M}^p_q({\mathbb R}^n)\to{\mathcal M}^p_q({\mathbb R}^n)$ implies $C_\varphi:{\rm W}{\mathcal M}^p_q({\mathbb R}^n)\to {\rm W}{\mathcal M}^p_q({\mathbb R}^n)$".

\vspace{10pt}
{\em Acknowledgement}. 
This work was supported by a JST CREST Grant (Number JPMJCR1913, Japan).
This work was also supported by the RIKEN Junior Research Associate Program.
The second author is supported by a Grant-in-Aid for Young Scientists Research 
(No.19K14581), Japan Society for the Promotion of Science.
The fourth author is supported by a 
Grant-in-Aid for Scientific Research (C) (19K03546), 
Japan Society for the Promotion of Science.


\begin{thebibliography}{999}

\bibitem{ADV07}
S. C. Arora,
G. Datt,
S. Verma,
Composition operators on Lorentz spaces,
Bull. Austral. Math. Soc. 76 (2007), no. 2, 205--214.

\bibitem{CHRL04}
Y. Cui, H. Hudzik, R. Kumar and L. Maligranda,
Composition operators in Orlicz spaces,
(English summary) J. Aust. Math. Soc. 76 (2004), no. 2, 189--206.

\bibitem{CVFR15}
R. E. Castillo, F. A. Vallejo Narvaez and J. C. Ramos Fernández,
Multiplication and composition operators on weak $L_p$ spaces,
Bull. Malays. Math. Sci. Soc. 38 (2015), no. 3, 927--973.

\bibitem{CwGu00}
M. Cwikel and A. Gulisashvili,
Interpolation on families of characteristic functions,
Studia Math. 138 (2000), no. 3, 209--224.

\bibitem{Evseev17}
N. A. Evseev,
A bounded composition operator on Lorentz spaces,
(Russian) Mat. Zametki 102 (2017), no. 6, 836–843; translation in Math. Notes 102 (2017), no. 5--6, 763--769.

\bibitem{EvMe19-1}
N. Evseev and A. Menovschikov,
Bounded operators on mixed norm Lebesgue spaces,
Complex Anal. Oper. Theory 13 (2019), no. 5, 2239--2258.

\bibitem{EvMe19-2}
N. A. Evseev and A. V. Menovshchikov,
The composition operator on mixed-norm Lebesgue spaces,
(Russian) Mat. Zametki 105 (2019), no. 6, 816--823;
translation in Math. Notes 105 (2019), no. 5--6, 812--817

\bibitem{Grafakos14-1}
L. Grafakos,
Classical Fourier Analysis,
Texts in Mathmatics, Springer, New York, Third edition {\bf 249} (2014).

\bibitem{IISpre}
M. Ikeda, I. Ishikawa and Y. Sawano,
Composition operators on reproducing kernel Hilbert spaces with analytic positive definite functions,
arXiv:1911.11992.

\bibitem{IFIHK}
I. Ishikawa, K. Fujii, M. Ikeda, Y. Hashimoto
and 
Y. Kawahara
Metric on Nonlinear Dynamical Systems
with Perron--Frobenius Operators, 
in Advances in Neural Information Processing Systems 31
(2018), 911–919.

\bibitem{Kawahara}
Y. Kawahara,
Dynamic Mode Decomposition with Reproducing Kernels for Koopman Spectral Analysis,
Neural Information Processing Systems,
Advances in Neural Information Processing Systems 29 (NIPS 2016),
9 pages.

\bibitem{Koopman31}
B.O. Koopman. 
Hamiltonian systems and transformation in Hilbert space.
Proceedings of the National Academy of Sciences, 17(5)
(1931),
315–318.

\bibitem{Kumar97}
R. Kumar,
Composition operators on Orlicz spaces,
Integral Equations Operator Theory 29 (1997), no. 1, 17--22.

\bibitem{RaSh12}
K. Raj and S. K. Sharma,
Composition operators on Musielak-Orlicz spaces of Bochner type,
Math. Bohem. 137 (2012), no. 4, 449--457.

\bibitem{Sawano17}
Y. Sawano,
A non-dense subspace in ${\mathcal M}^p_q$ with $1<q<p<\infty$. Trans. A. Razmadze Math. Inst. 171 (2017), no. 3, 379--380.

\bibitem{Singh76}
R. K. Singh,
Composition operators induced by rational functions,
Proc. Amer. Math. Soc. 59 (1976), no. 2, 329--333.

\end{thebibliography}
\end{document}